\newtheoremstyle{kai}
{3pt} {3pt} {} {} {\bfseries} {.} {.5em} {}
\makeatletter \setcounter{page}{1}
\def\EquationsBySection{\def\theequation
{\thesection.\arabic{equation}}%
\@addtoreset{equation}{section}}
\newcommand\old[1]{}
\newcommand{\pend}{\hfill \thicklines \framebox(6.6,6.6)[l]{}}
\renewenvironment{proof}{\noindent {\it  Proof.} \rm}{\pend}
\newtheorem{theorem}{Theorem}[section]
\newtheorem{lemma}{Lemma}[section]
\newtheorem{corollary}{Corollary}[section]
\newtheorem{remark}{Remark}[section]
\newtheorem{definition}{Definition}[section]
\newcommand{\disp}{\displaystyle}
\begin{document}
\pagestyle{plain}
\title
{\bf Competitive Lotka-Volterra Population Dynamics with Jumps}

\author{Jianhai Bao$^{1,3}$, Xuerong Mao$^2$,
 Geroge Yin$^3$, Chenggui Yuan$^4$
\\
\\
$^1$School of Mathematics, Central South University,\\
 Changsha, Hunan
410075, P.R.China\\
jianhaibao@yahoo.com.cn\\[1ex]
 $^2$Department of Statistics and Modelling Science,\\
University of Strathclyde, Glasgow G1 1XH, UK
\\
xuerong@stams.strath.ac.uk\\[1ex]
  $^3$Department of Mathematics,\\
Wayne State University, Detroit, Michigan 48202.
\\
gyin@math.wayne.edu\\[1ex]
$^4$Department of Mathematics, Swansea University,\\
 Swansea SA2 8PP, UK\\
 C.Yuan@swansea.ac.uk
 }

\date{}
\maketitle
\begin{abstract}
This paper considers competitive Lotka-Volterra population dynamics
with jumps. The contributions of this paper are as follows. (a) We
show stochastic differential equation (SDE) with jumps associated
with the model has a unique global positive solution; (b) We discuss
the uniform boundedness of $p$th moment with $p>0$ and reveal the
sample Lyapunov exponents; (c) Using a variation-of-constants
formula for a class of SDEs with jumps, we provide explicit solution
for $1$-dimensional competitive Lotka-Volterra population dynamics
with jumps, and investigate  the sample Lyapunov exponent for each
component and  the extinction of our $n$-dimensional model.

\medskip

\noindent {\bf Keywords.}   Lotka-Volterra Model, Jumps, Stochastic
Boundedness, Lyapunov Exponent, Variation-of-Constants Formula,
Stability in Distribution, Extinction.

\medskip

\noindent{\bf Mathematics Subject Classification (2010).} \ 93D05,
60J60, 60J05.
\end{abstract}
\newpage

\section{Introduction}
The differential equation
\begin{equation*}
\begin{cases}
\dfrac{dX(t)}{dt}&  =X(t)[a(t)-b(t)X(t)],\ \ \ t\geq0,\\
X(0)& = x,
\end{cases}
\end{equation*}
 has been used to model
the population growth of a single species whose members usually live
in proximity, share the same basic requirements, and compete for
resources, food, habitat, or territory, and is known as the
competitive Lotka-Volterra model or logistic equation. The
competitive Lotka-Volterra model for $n$ interacting species is
described by the $n$-dimensional differential equation
\begin{equation}\label{eq08}
\frac{dX_i(t)}{dt}=X_i(t)\left[a_i(t)-\sum\limits_{j=1}^nb_{ij}(t)X_j(t)\right],i=1,2,\cdots,
n,
\end{equation}
where $X_i(t)$ represents the population size of species $i$ at time
$t$, $a_i(t)$ is the rate of growth at time $t$, $b_{ij}(t)$
represents the effect of interspecific (if $i\neq j$) or
intraspecific (if $i=j$) interaction at time $t$, $a_i(t)/b_{ij}(t)$
is the carrying capacity of the $i$th species in absence of other
species at time $t$. Eq. \eqref{eq08}  takes the matrix form
\begin{equation}\label{eq09}
\frac{dX(t)}{dt}=\mbox{diag}(X_1(t),\cdots,X_n(t))\left[a(t)-B(t)X(t)\right],
\end{equation}
where
\begin{equation*}
X=(X_1,\cdots,X_n)^T,a=(a_1,\cdots,a_n)^T, B=(b_{ij})_{n\times n}.
\end{equation*}
There is an extensive literature concerned with the dynamics of Eq.
\eqref{eq09} and we here only mention Gopalsamy \cite{g92}, Kuang
\cite{k93}, Li et al. \cite{ltj99}, Takeuchi and Adachi \cite{ta80,
ta78}, Xiao and Li \cite{xl00}. In particular, the books by
Gopalsamy \cite{g92}, and Kuang \cite{k93} are good references in
this area.

On the other hand, the deterministic models assume that parameters
in the systems are all deterministic irrespective environmental
fluctuations, which, from the points of biological view, has some
limitations in mathematical modeling of ecological systems. While,
population dynamics in the real world is
 affected inevitably by environmental noise, see, e.g., Gard
 \cite{g84,g86}. Therefore, competitive Lotka-Volterra models in
 random environments are becoming more and more popular. In general, there are two  ways  considered in the literature to
 model the influence of environmental fluctuations in population
 dynamics. One is to consider the random perturbations of interspecific
 or intraspecific interactions by white noise. Recently, Mao et al. \cite{mmr02} investigate
 stochastic $n$-dimensional Lotka-Volterra system
\begin{equation}\label{eq0}
dX(t)=\mbox{diag}(X_1(t),\cdots,X_n(t))\left[(a+BX(t))dt+\sigma
X(t)dW(t)\right],
\end{equation}
where $W$ is a one-dimensional standard Brownian motion, and reveal
that the environmental noise can suppress a potential population
explosion (see, e.g., \cite{myz05, my06} among others in this
connection). Another is to consider the stochastic perturbation of
growth rate $a(t)$ by the white noise with
\begin{equation*}
a(t)\rightarrow a(t)+\sigma(t)\dot{W}(t),
\end{equation*}
where $\dot{W}(t)$ is a white noise, namely, $W(t)$ is a Brownian
motion defined on a complete probability space $(\Omega, \mathcal
{F},\mathbb{P})$ with a filtration $\{\mathcal {F}\}_{t\geq0}$
satisfying the usual conditions (i.e., it is right continuous and
increasing while $\mathcal {F}_0$ contains all $\mathbb{P}$-null
sets). As a result, Eq. \eqref{eq09} becomes a competitive
Lotka-Volterra model in
 random environments
\begin{equation}\label{eq41}
dX(t)=\mbox{diag}(X_1(t),\cdots,X_n(t))\left[(a(t)-B(t)X(t))dt+\sigma(t)
dW(t)\right].
\end{equation}
There is also extensive literature concerning all kinds of
properties of model \eqref{eq41}, see, e.g.,  Hu and Wang
\cite{hw10}, Jiang and Shi \cite{js05}, Liu and Wang \cite{lw11},
Zhu and Yin \cite{zy09a, zy09b}, and the references therein.

Furthermore, the population may suffer sudden environmental shocks,
e.g., earthquakes, hurricanes, epidemics, etc. However, stochastic
Lotka-Volterra model \eqref{eq41} cannot explain such phenomena. To explain these phenomena, introducing a jump process into
underlying population dynamics provides a  feasible and more
realistic model.
In this paper, we
develop Lotka-Volterra model with jumps\begin{equation}\label{eq42}
\begin{split}
dX(t)&=\mbox{diag}(X_1(t^-),\cdots,X_n(t^-))\Big[(a(t)-B(t)X(t))dt\\
&\quad+\sigma(t)
dW(t)+\int_{\mathbb{Y}}\gamma(t,u)\tilde{N}(dt,du)\Big].
\end{split}
\end{equation}
Here $X, a, B$ are defined as in Eq. \eqref{eq09},
\begin{equation*}
\sigma=(\sigma_1,\cdots,\sigma_n)^T,
\gamma=(\gamma_1,\cdots,\gamma_n)^T,
\end{equation*}
 $W$ is a real-valued standard Brownian motion, $N$ is a Poisson counting
 measure with characteristic measure $\lambda$ on a measurable subset
$\mathbb{Y}$ of $[0,\infty)$ with $\lambda(\mathbb{Y})<\infty$,
 $\tilde{N}(dt,du):=N(dt,du)-\lambda(du)dt$. Throughout the paper, we
 assume that $W$ and $N$ are independent.

 As we know,
for example, bees colonies in a field \cite{r83}. In particular,
they compete for food strongly with the colonies located near to
them. Similar phenomena abound in the nature, see, e.g., \cite{r79}.
Hence it is reasonable to assume that the self-regulating
competitions within the same species are strictly positive, e.g.,
\cite{zy09a, zy09b}. Therefore we also assume

\begin{description}
\item{${\bf(A)}$} For any $t\geq0$ and $i,j=1,2,\cdots,n$ with $i\neq
j$, $a_i(t)>0, b_{ii}(t)>0, b_{ij}(t)\geq0, \sigma_i(t)$ and $\gamma_i(t, u)$ are bounded
functions, $\hat{b}_{ii}:=\inf_{t\in \mathbb{R}_+}b_{ii}(t)>0$ and
$\gamma_i(t,u)>-1,u\in\mathbb{Y}$.
\end{description}

In reference to the existing results in the literature, our
contributions are as follows:
\begin{itemize}
\item We use jump diffusion to model the evolutions of
population dynamics;
\item We demonstrate that if the population dynamics with jumps is
self-regulating or competitive, then the population will not explode
in a finite time almost surely;
\item We discuss the uniform boundedness of $p$-th moment for any $p>0$ and reveal the sample Lyapunov exponents;
\item We obtain the explicit expression of $1$-dimensional
competitive Lotka-Volterra model with jumps, the uniqueness of invariant measure,
and further reveal precisely the sample Lyapunov exponents for each
component and investigate its extinction.
\end{itemize}

\section{Global Positive Solutions}
As the $i$th state $X_i(t)$ of Eq. \eqref{eq42} denotes the size of
the $i$th species in the system, it should be nonnegative. Moreover,
in order to guarantee SDEs to have a unique global (i.e., no
explosion in a finite time) solution for any given initial data, the
coefficients of the equation are generally required to satisfy the
linear growth and local Lipschitz conditions, e.g., \cite{my06}.
However, the drift coefficient of Eq. \eqref{eq42} does not satisfy
the linear growth condition, though it is locally Lipschitz
continuous, so the solution of Eq. \eqref{eq42} may explode in a
finite time. It is therefore requisite to provide some conditions
under which the solution of Eq. \eqref{eq42} is not only positive
but will also not explode to infinite in any finite time.

Throughout this paper, $K$ denotes a generic constant whose values
may vary for its different appearances. For a bounded function $\nu$
defined on $\mathbb{R}_+$, set
\begin{equation*}
\hat{\nu}:=\inf_{t\in\mathbb{R}_+}\nu(t) \mbox{ and }
\check{\nu}:=\sup_{t\in\mathbb{R}_+}\nu(t).
\end{equation*}

For convenience of reference, we recall some fundamental
inequalities stated as a lemma.
\begin{lemma}
{\rm \begin{equation}\label{eq100} x^r\leq1+r(x-1),\ \ \ \ x\geq0,\
\ \ 1\geq r\geq0,
\end{equation}
\begin{equation}\label{eq101}
n^{(1-\frac{p}{2})\wedge0}|x|^{p}\leq\sum\limits_{i=1}^nx_i^p\leq
n^{(1-\frac{p}{2})\vee0}|x|^{p}, \forall p>0, x\in\mathbb{R}^n_+,
\end{equation}
where $\mathbb{R}^n_+:=\{x\in \mathbb{R}^n:x_i>0, 1\leq i\leq n\}$,
and
\begin{equation}\label{eq102}
\ln x\leq x-1,\ \ \ \ \  \ \ \ \ \ \ \ \ \ \ \ \  x>0.
\end{equation}

}

\end{lemma}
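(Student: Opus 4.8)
The plan is to treat the three inequalities separately, since each is a standard consequence of an elementary convexity or concavity argument and none of them involves the structure of the underlying equation. For \eqref{eq100} I would fix $r\in(0,1)$ (the cases $r=0$ and $r=1$ being trivial equalities) and consider $g(x):=1+r(x-1)-x^r$ on $[0,\infty)$. Differentiating gives $g'(x)=r(1-x^{r-1})$, which is negative for $0<x<1$ and positive for $x>1$, so $g$ attains its global minimum at $x=1$, where $g(1)=0$; at the endpoint $x=0$ one has $g(0)=1-r\ge0$. Hence $g\ge0$ throughout, which is \eqref{eq100}. Equivalently, this is just the assertion that the concave map $x\mapsto x^r$ lies below its tangent line at $x=1$. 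The inequality \eqref{eq102} follows from the same tangent-line idea: setting $h(x):=x-1-\ln x$ gives $h'(x)=1-1/x$, so $h$ is minimized at $x=1$ with $h(1)=0$, whence $\ln x\le x-1$ for all $x>0$.

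The only step needing genuine care is \eqref{eq101}, the equivalence of the $\ell^p$ and Euclidean norms on $\mathbb{R}^n_+$, because the direction of the two bounds flips at $p=2$. I would reduce it to a power-mean inequality by substituting $y_i:=x_i^2\ge0$ and $s:=p/2>0$, so that the claim becomes $\sum_i y_i^s$ sandwiched between $n^{(1-s)\wedge0}\big(\sum_i y_i\big)^s$ and $n^{(1-s)\vee0}\big(\sum_i y_i\big)^s$. For $s\ge1$ (that is, $p\ge2$) the map $t\mapsto t^s$ is convex, so Jensen's inequality $\frac1n\sum_i y_i^s\ge\big(\frac1n\sum_i y_i\big)^s$ supplies the lower bound $n^{1-s}\big(\sum_i y_i\big)^s$, while superadditivity of $t\mapsto t^s$ gives $\sum_i y_i^s\le\big(\sum_i y_i\big)^s$ for the upper bound with exponent $0$. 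For $0<s<1$ (that is, $0<p<2$) both inequalities reverse: concavity yields the upper bound $n^{1-s}\big(\sum_i y_i\big)^s$ and subadditivity yields the lower bound $\big(\sum_i y_i\big)^s$. Rewriting $n^{1-s}=n^{1-p/2}$ and matching the two regimes against $(1-p/2)\wedge0$ and $(1-p/2)\vee0$ then produces \eqref{eq101}.

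The main obstacle here is therefore not analytic depth but bookkeeping: one must keep the two regimes $p\ge2$ and $0<p<2$ straight so that each of the two bounds lands on the correct exponent, and one must remember to use the \emph{normalized} Jensen inequality (dividing by $n$) in order to generate the factor $n^{1-p/2}$ rather than an incorrect power of $n$. Once the substitution $y_i=x_i^2$, $s=p/2$ is in place, the rest is a routine verification of power-mean and sub/superadditivity bounds for $t\mapsto t^s$, and no further machinery is required.
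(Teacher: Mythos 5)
Your proof is correct. The paper gives no proof of this lemma at all---it merely ``recalls'' the three inequalities as standard facts for later reference---so there is nothing to compare against; your tangent-line/monotonicity arguments for \eqref{eq100} and \eqref{eq102}, and the reduction of \eqref{eq101} via the substitution $y_i=x_i^2$, $s=p/2$ to normalized Jensen plus sub/superadditivity of $t\mapsto t^s$, constitute a sound and complete verification, with the two regimes $p\ge 2$ and $0<p<2$ handled correctly (and consistently at $p=2$, where both exponents collapse to $0$).
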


\begin{theorem}\label{positive solution}
{\rm Under assumption ${\bf(A)}$, for any initial condition $
X(0)=x_0\in\mathbb{R}^n_+$, Eq. \eqref{eq42} has a unique global
solution $X(t)\in\mathbb{R}^n_+$ for any $t\geq0$ almost surely.}
\end{theorem}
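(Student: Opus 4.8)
\emph{The plan is to} follow the now-standard Lyapunov-function-plus-stopping-time scheme for Lotka-Volterra type SDEs, adapted to the jump setting. First, since the drift $\mbox{diag}(x)(a(t)-B(t)x)$ and the coefficients $\mbox{diag}(x)\sigma(t)$ and $\mbox{diag}(x)\gamma(t,u)$ are polynomial in $x$, they are locally Lipschitz; hence by the standard theory for SDEs with jumps there is a unique maximal local solution on a random interval $[0,\tau_e)$, where $\tau_e$ is the explosion time. The diagonal (multiplicative) structure of \eqref{eq42} keeps each component positive: a jump sends $X_i$ to $X_i(1+\gamma_i(t,u))$, which stays strictly positive precisely because of the hypothesis $\gamma_i(t,u)>-1$ in ${\bf(A)}$. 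Thus $X(t)\in\mathbb{R}^n_+$ for all $t<\tau_e$, and the whole task reduces to proving $\tau_e=\infty$ almost surely.

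For the non-explosion part I would use the Lyapunov function
\[
V(x)=\sum_{i=1}^n(x_i-1-\ln x_i),\qquad x\in\mathbb{R}^n_+,
\]
which is nonnegative by \eqref{eq102} and blows up as any $x_i\to 0^+$ or $x_i\to\infty$. Applying the It\^o formula for jump processes and collecting terms, the diffusion contribution to the generator is the bounded quantity $\tfrac12\sum_i\sigma_i^2(t)$, and the compensated jump integral reduces, after cancellation of the first-order term against the compensator, to $\int_{\mathbb{Y}}\sum_i[\gamma_i(t,u)-\ln(1+\gamma_i(t,u))]\lambda(du)$, which is bounded because $\lambda(\mathbb{Y})<\infty$ and the $\gamma_i$ are bounded (and bounded away from $-1$). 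The drift contribution is $\sum_i(x_i-1)(a_i(t)-\sum_j b_{ij}(t)x_j)$; here the decisive term is $-\sum_{i,j}b_{ij}(t)x_ix_j\le-\hat b\sum_i x_i^2$ with $\hat b:=\min_i\hat b_{ii}>0$, coming from ${\bf(A)}$ and the sign conditions $b_{ii}>0,\ b_{ij}\ge0$. This negative-definite quadratic dominates all the remaining terms, which are at most linear in $x$, so one obtains a uniform bound $\mathcal{L}V(x)\le K$ for a constant $K$ independent of $x$.

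With this estimate in hand, the conclusion is routine. For every integer $k$ exceeding each coordinate of $x_0$ and each of their reciprocals, set $\tau_k=\inf\{t\in[0,\tau_e): X_i(t)\notin(1/k,k)\text{ for some }i\}$; these stopping times increase to some $\tau_\infty\le\tau_e$, and it suffices to show $\tau_\infty=\infty$ a.s. Applying It\^o's formula to $V(X(t\wedge\tau_k))$, taking expectations so that the Brownian and compensated-Poisson martingale terms vanish, and using $\mathcal{L}V\le K$ gives $\E V(X(t\wedge\tau_k))\le V(x_0)+KT$ for all $t\le T$. On the event $\{\tau_k\le T\}$ some component of $X(\tau_k)$ equals $1/k$ or $k$, forcing $V(X(\tau_k))\ge\delta_k$, where $\delta_k:=\min\{k-1-\ln k,\ 1/k-1+\ln k\}\to\infty$. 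Hence $\delta_k\,\mathbb{P}(\tau_k\le T)\le V(x_0)+KT$, and letting $k\to\infty$ yields $\mathbb{P}(\tau_\infty\le T)=0$; since $T$ is arbitrary, $\tau_\infty=\tau_e=\infty$ a.s.

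\emph{The main obstacle} is the computation of $\mathcal{L}V$ in the jump setting: one must verify that the first-order jump term is exactly cancelled by the compensator and that the residual $\gamma_i-\ln(1+\gamma_i)$ integrates to a bounded function of $t$, which is exactly where the boundedness of $\gamma_i$, the hypothesis $\gamma_i>-1$, and $\lambda(\mathbb{Y})<\infty$ are used. Everything else is the classical competitive-Lotka-Volterra argument, with the self-regulation hypothesis $\hat b_{ii}>0$ supplying the coercive quadratic that controls the drift.
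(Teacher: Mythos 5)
Your argument is correct, but it takes a genuinely different route from the paper's. After the common first step (local Lipschitz continuity gives a unique maximal local solution on $[0,\tau_e)$, and the multiplicative structure together with $\gamma_i>-1$ keeps each component positive), the paper does not use a Lyapunov function at all: it sandwiches each component, $Z_i(t)\le X_i(t)\le Y_i(t)$ on $[0,\tau_e)$, via the comparison theorem of Peng and Zhu \cite{pz06}, where $Y_i$ solves the scalar logistic jump SDE \eqref{eq103} and $Z_i$ solves \eqref{eq112}, and then rules out explosion through the explicit variation-of-constants representation of $Y_i$ (Lemma \ref{explicit solution}), which shows $Y_i$ is finite for all time while $Z_i$ stays strictly positive. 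Your proof is instead the classical scheme of \cite{mmr02} adapted to jumps: the Lyapunov function $V(x)=\sum_{i=1}^n(x_i-1-\ln x_i)$, the exact cancellation of the first-order jump term against the compensator leaving the nonnegative bounded residual $\sum_{i=1}^n\int_{\mathbb{Y}}[\gamma_i(t,u)-\ln(1+\gamma_i(t,u))]\lambda(du)$, the coercive quadratic $-\hat b\sum_i x_i^2$ supplied by ${\bf(A)}$, and the stopping-time/Chebyshev argument. Your route is self-contained and avoids both the comparison theorem (whose hypotheses must be verified in the jump setting) and the paper's forward reference to Lemma \ref{explicit solution}; the paper's route earns its keep because the sandwich $Z_i\le X_i\le Y_i$ and the explicit formula are reused heavily later (sample Lyapunov exponents, extinction). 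Two small points to tighten in your write-up: first, with jumps, at $\tau_k$ a component may overshoot the boundary rather than ``equal'' $1/k$ or $k$, but since $u\mapsto u-1-\ln u$ is monotone on each side of $1$ the bound $V(X(\tau_k))\ge\delta_k$ survives; second, your parenthetical assumption that the $\gamma_i$ are ``bounded away from $-1$'' is genuinely needed for $\int_{\mathbb{Y}}[\gamma_i-\ln(1+\gamma_i)]\lambda(du)$ to be finite, whereas ${\bf(A)}$ as stated only gives $\gamma_i>-1$ --- though this is a caveat shared with the paper, whose own proof implicitly requires the same integrability for $\Phi_i(t)$ in Lemma \ref{explicit solution} to be well defined and strictly positive.
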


\begin{proof}
Since the drift coefficient does not fulfil the linear growth
 condition, the general theorems of existence and uniqueness cannot
 be implemented to this equation. However, it is locally Lipschitz continuous, therefore for any given
 initial condition $X(0)\in
\mathbb{R}^n_+$ there is a unique local solution
 $X(t)$ for $t\in[0,\tau_e)$, where $\tau_e$ is the explosion time. By Eq. \eqref{eq42} the $i$th component
$X_i(t)$ of $X(t)$  admits the form for $i=1,\cdots,n$
\begin{equation*}
dX_i(t)=X_i(t^-)\Big[\Big(a_i(t)-\sum\limits_{j=1}^nb_{ij}(t)X_j(t)\Big)dt+\sigma_i(t)dW(t)
+\int_{\mathbb{Y}}\gamma_i(t,u)\tilde{N}(dt,du)\Big].
\end{equation*}
 Noting that for any $t\in[0,\tau_e)$
\begin{equation*}
\begin{split}
X_i(t)=X_i(0)\exp\Big\{&\int_0^t\Big(a_i(s)-\sum\limits_{j=1}^nb_{ij}(s)X_j(s)-\frac{1}{2}
\sigma_i^2(s)\\
&+\int_\mathbb{Y}(\ln(1+\gamma_i(s,u))-\gamma_i(s,u))\lambda(du)\Big)ds\\
&+\int_0^t\sigma_i(s)dW(s)+\int_0^t\int_\mathbb{Y}\ln(1+\gamma_i(s,u))\tilde{N}(ds,du)\Big\},
\end{split}
\end{equation*}
together with $X_i(0)>0$, we can conclude $X_i(t)\geq0$ for any
$t\in[0,\tau_e)$. Now consider the following two auxiliary SDEs with
jumps
\begin{equation}\label{eq103}
\begin{array}{ll}
dY_i(t)&\!\!\!\disp=
Y_i(t^-)\Big[\Big(a_i(t)-b_{ii}(t)Y_i(t)\Big)dt+\sigma_i(t)dW(t)
+\int_{\mathbb{Y}}\gamma_i(t,u)\tilde{N}(dt,du)\Big],\\
Y_i(0)&\!\!\!= X_i(0),
\end{array}
\end{equation}
and
\begin{equation}\label{eq112}
\begin{array}{ll}
dZ_i(t)&\!\!\!\disp= Z_i(t^-)\Big[\Big(a_i(t)-\sum\limits_{i\neq
j}b_{ij}(t)Y_j(t)-b_{ii}(t)Z_i(t)\Big)dt+\sigma_i(t)dW(t)
+\int_{\mathbb{Y}}\gamma_i(t,u)\tilde{N}(dt,du)\Big],\\
Z_i(0)&\!\!\!= X_i(0).
\end{array}
\end{equation}
Due to $1+\gamma_i(t,u)>0$ by ${\bf(A)}$, it follows that for any
$x_2\geq x_1$
\begin{equation*}
(1+\gamma_i(t,u))x_2\geq(1+\gamma_i(t,u))x_1.
\end{equation*}
Then by the comparison theorem \cite[Theorem 3.1]{pz06} we can
conclude  that
\begin{equation}\label{eq111}
Z_i(t)\leq X_i(t)\leq Y_i(t),t\in[0,\tau_e).
\end{equation}
By Lemma \ref{explicit solution} below, for $Y_i(0)(=X_i(0))>0,$ we know that $Y_i(t)$ will not be expolded in any finite time. Moreover, similar to that of Lemma \ref{explicit solution} below for $Z_i(0)(=X_i(0))>0,$ we can show
\begin{equation*}
\mathbb{P}(Z_i(t)>0 \mbox{ on } t\in[0,\tau_e))=1.
\end{equation*}
Hence $\tau_e=\infty$ and $X_i(t)>0$ almost surely for any $t\in[0,\infty)$.
The proof is therefore complete.
\end{proof}

\section{Boundedness, Tightness, and Lyapunov-type Exponent}

In the previous section, we see that Eq. \eqref{eq42} has a unique
global solution $X(t)\in\mathbb{R}^n_+$ for any $t\geq0$ almost
surely. In this part we shall show for any $p>0$ the solution $X(t)$
of Eq. \eqref{eq42} admits uniformly finite $p$-th moment, and
discuss the long-term behaviors.

\begin{theorem}\label{finite moment}
{\rm Let assumption ${\bf(A)}$  hold.
\begin{description}
\item[(1)] For any $p\in [0,1, ]$ there is a constant $K$ such that
\begin{equation}\label{eq9501}
\sup_{t\in\mathbb{R}_+}\mathbb{E}|X(t)|^p\leq K.
\end{equation}
\item[(2)] Assume further that there exists a
constant $\bar{K}(p)>0$ such that for some $p>1,t\geq0,i=1,\cdots,n$
\begin{equation}\label{eq90}
\int_{\mathbb{Y}}|\gamma_i(t,u)|^p\lambda(du)\leq \bar{K}(p).
\end{equation}
Then  there exists a constant $K(p)>0$ such that
\begin{equation}\label{eq95}
\sup_{t\in\mathbb{R}_+}\mathbb{E}|X(t)|^p\leq K(p).
\end{equation}
\end{description} }
\end{theorem}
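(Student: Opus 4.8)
The plan is to establish the estimate componentwise---controlling $\mathbb{E}[X_i^p(t)]$ for each $i$---and then assemble the bound on $\mathbb{E}|X(t)|^p$ via inequality \eqref{eq101}. First I would apply the It\^o formula for jump processes to $X_i^p(t)$, using the explicit form of $dX_i$ recorded in the proof of Theorem \ref{positive solution}. The continuous part contributes the drift $pX_i^p(a_i-\sum_j b_{ij}X_j)+\tfrac12 p(p-1)\sigma_i^2X_i^p$, while the jump part contributes the compensator $X_i^p\int_{\mathbb{Y}}[(1+\gamma_i)^p-1-p\gamma_i]\lambda(du)$ together with a $\tilde N$-martingale. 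Because the solution is positive (Theorem \ref{positive solution}) and $b_{ij}(t)\ge0$ for $i\neq j$, the off-diagonal interaction terms $-pX_i^p\sum_{j\neq i}b_{ij}X_j$ are nonpositive and may be discarded for an upper bound, whereas the diagonal term retains the crucial factor $-pb_{ii}X_i^{p+1}\le-p\hat b_{ii}X_i^{p+1}$.

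The decisive step is to bound the jump compensator uniformly in $t$. For $p\in[0,1]$ I would apply \eqref{eq100} with $x=1+\gamma_i$ and $r=p$ (legitimate since $1+\gamma_i>0$ by ${\bf(A)}$) to obtain $(1+\gamma_i)^p\le1+p\gamma_i$, so the entire compensator is $\le0$; combined with $p(p-1)\le0$, this shows the drift of $X_i^p$ is at most $X_i^p(p\check a_i-p\hat b_{ii}X_i)$, a function of $X_i$ bounded above on $[0,\infty)$. Note that no extra hypothesis beyond ${\bf(A)}$ is needed here, which is why part (1) stands alone. For $p>1$ the concavity inequality fails, and this is the main obstacle: here I would invoke an elementary estimate of the form $(1+y)^p-1-py\le C_p(y^2+|y|^p)$ valid for $y>-1$, and then control $\int_{\mathbb{Y}}|\gamma_i|^2\lambda(du)$ and $\int_{\mathbb{Y}}|\gamma_i|^p\lambda(du)$ using the boundedness of $\gamma_i$ together with $\lambda(\mathbb{Y})<\infty$ and the standing hypothesis \eqref{eq90}. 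This produces a constant $J_i$, uniform in $t$, bounding the compensator, so that in both regimes the drift of $X_i^p$ is dominated by $\alpha_i X_i^p-p\hat b_{ii}X_i^{p+1}$ with $\alpha_i$ a finite constant.

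With the drift bounded above, I would run the standard exponential-weight argument. Applying It\^o to $e^{\epsilon t}X_i^p$ for a fixed $\epsilon>0$ and taking expectations gives $\frac{d}{dt}\mathbb{E}[e^{\epsilon t}X_i^p]\le e^{\epsilon t}\mathbb{E}[(\epsilon+\alpha_i)X_i^p-p\hat b_{ii}X_i^{p+1}]$; since $x\mapsto(\epsilon+\alpha_i)x^p-p\hat b_{ii}x^{p+1}$ is bounded above by some $M_i$ on $[0,\infty)$, integration yields $\mathbb{E}[X_i^p(t)]\le e^{-\epsilon t}X_i^p(0)+M_i/\epsilon$, hence $\sup_t\mathbb{E}[X_i^p(t)]\le X_i^p(0)+M_i/\epsilon=:K_i$. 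A technical point to handle is that the stochastic integrals are only local martingales a priori, so I would first localize with stopping times $\tau_k=\inf\{t:|X(t)|\ge k\}$, take expectations on $[0,t\wedge\tau_k]$ where the integrands are bounded, and pass $k\to\infty$ by Fatou's lemma. Finally, summing over $i$ and using \eqref{eq101} in the form $|x|^p\le n^{(p/2-1)\vee0}\sum_i x_i^p$ gives $\sup_t\mathbb{E}|X(t)|^p\le n^{(p/2-1)\vee0}\sum_i K_i=:K$, which yields \eqref{eq9501} for $p\in[0,1]$ and \eqref{eq95} for $p>1$.
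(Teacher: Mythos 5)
Your proposal is correct and follows essentially the same route as the paper: It\^o's formula applied to $p$th powers with an exponential weight, discarding the nonpositive off-diagonal terms, using the $-p\hat b_{ii}x_i^{p+1}$ term to dominate the lower-order terms so the drift is bounded above by a constant, handling the jump compensator via inequality \eqref{eq100} when $p\in[0,1]$ and via boundedness of $\gamma_i$, $\lambda(\mathbb{Y})<\infty$ and \eqref{eq90} when $p>1$, and finally passing to $|X(t)|^p$ through \eqref{eq101}. The only differences are cosmetic (componentwise bounds summed at the end rather than the aggregate Lyapunov function $V(x)=\sum_i x_i^p$, and the weight $e^{\epsilon t}$ instead of $e^t$), plus your explicit localization/Fatou step and compensator estimate, which supply details the paper leaves implicit.
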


\begin{proof}  We shall prove (\ref{eq95}) firstly.
Define a Lyapunov function for
$p>1$
\begin{equation}\label{eq43}
V(x):=\sum\limits_{i=1}^nx_i^p, x\in\mathbb{R}^n_+.
\end{equation}
 Applying the It\^o formula, we obtain
\begin{equation*}
\mathbb{E}(e^tV(X(t)))=V(x_0)+\mathbb{E}\int_0^te^s[V(X(s))+\mathcal
{L}V(X(s),s)]ds,
\end{equation*}
where, for $x\in\mathbb{R}^n_+$ and $t\geq0$,
\begin{equation}\label{eq50}
\begin{split}
\mathcal {L}V(x,t)&:=p\sum\limits_{i=1}^n
\left[a_i(t)-\sum\limits_{j=1}^nb_{ij}(t)x_j-\frac{(1-p)\sigma_i^2(t)}{2}\right]x_i^{p}
\\
&\quad+\sum\limits_{i=1}^n\int_{\mathbb{Y}}\left[(1+\gamma_i(t,u))^{p}-1
-p\gamma_i(t,u)\right]\lambda(du)x_i^{p}.
\end{split}
\end{equation}
 By
assumption ${\bf(A)}$ and \eqref{eq90}, we can deduce that there
exists constant $K>0$ such that
\begin{equation*}
\begin{split}
V(x)+\mathcal {L}V(x,t)&\leq \sum\limits_{i=1}^n\left[-pb_{ii}(t)
x_i^{p+1}+\left(1+pa_i(t)+\frac{p(p-1)\sigma_i^2(t)}{2}\right)x_i^{p}\right]\\
&\quad+\sum\limits_{i=1}^n\int_{\mathbb{Y}}\left[(1+\gamma_i(t,u))^{p}-1
-p\gamma_i(t,u)\right]\lambda(du)x_i^{p}\\
&\leq K.
\end{split}
\end{equation*}
Hence
\begin{equation*}
\mathbb{E}(e^tV(X(t)))\leq V(x_0)+\int_0^tKe^sds=V(x_0)+K(e^t-1),
\end{equation*}
which yields the desired assertion (\ref{eq95}) by the inequality \eqref{eq101}.

For any $p\in[0,1]$, according to the inequality \eqref{eq100},
\begin{equation*}
\int_{\mathbb{Y}}\left[(1+\gamma_i(t,u))^{p}-1
-p\gamma_i(t,u)\right]\lambda(du)\leq0.
\end{equation*}
Consequently
\begin{equation*}
\begin{split}
V(x)+\mathcal {L}V(x,t)\leq \sum\limits_{i=1}^n\left[-pb_{ii}(t)
x_i^{p+1}+\left(1+pa_i(t)\right)x_i^{p}\right],
\end{split}
\end{equation*}
which has upper bound by ${\bf(A)}$. Then \eqref{eq9501} holds with
$p\in[0,1]$ under  ${\bf(A)}$.
\end{proof}

\begin{corollary}\label{exin}
{\rm Under assumption ${\bf(A)}$, there exists an invariant
probability measure for the solution $X(t)$ of Eq. \eqref{eq42}.}
\end{corollary}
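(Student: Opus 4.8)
The natural strategy is the Krylov--Bogoliubov procedure: I would realize an invariant measure as a weak limit point of the time-averaged laws
\begin{equation*}
\mu_T(A):=\frac{1}{T}\int_0^T\mathbb{P}(X^{x_0}(s)\in A)\,ds,\qquad T>0,
\end{equation*}
where $X^{x_0}$ solves Eq. \eqref{eq42} from $x_0\in\mathbb{R}^n_+$, global by Theorem \ref{positive solution}. Two things must be checked: that the family $\{\mu_T\}_{T>0}$ is tight on the state space, and that the transition semigroup $P_tf(x):=\mathbb{E}f(X^x(t))$ maps $C_b$ into itself (the Feller property). Granting both, every weak limit of $\{\mu_{T_k}\}$ along a suitable sequence $T_k\to\infty$ is $P_t$-invariant, which gives the claim.

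Tightness is where Theorem \ref{finite moment} does the work. Fixing any $p\in(0,1]$, part (1) gives $\sup_{s\ge0}\mathbb{E}|X(s)|^p\le K$, so Chebyshev's inequality yields, uniformly in $T$,
\begin{equation*}
\mu_T(\{x:|x|>R\})\le\frac{1}{T}\int_0^T\frac{\mathbb{E}|X(s)|^p}{R^p}\,ds\le\frac{K}{R^p}\longrightarrow0\quad(R\to\infty),
\end{equation*}
so no mass of $\{\mu_T\}$ escapes to infinity; this already yields tightness when the state space is regarded as $[0,\infty)^n$. The Feller property I would obtain from continuous dependence of the solution on its starting point: if $x_k\to x$ then $X^{x_k}(t)\to X^{x}(t)$ in probability (for fixed $t$, a.s. not a jump time), whence $P_tf(x_k)\to P_tf(x)$ for $f\in C_b$ by bounded convergence.

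The main obstacle is conceptual and lies at the boundary: every hyperplane $\{x_i=0\}$ is invariant for Eq. \eqref{eq42}, since each coefficient carries the factor $X_i(t^-)$, so the moment bound prevents escape to infinity but not to the boundary. To guarantee that the resulting invariant measure is the nontrivial one carried by the open orthant $\mathbb{R}^n_+$ (rather than a degenerate law such as $\delta_0$), one must upgrade the above to genuine tightness on $\mathbb{R}^n_+$, whose compact subsets are bounded away from both infinity and the coordinate hyperplanes. I would supply the missing half by a complementary uniform lower estimate $\sup_{s\ge0}\mathbb{E}|X_i(s)|^{-q}\le K$ for some small $q>0$, applying the It\^o formula to $x\mapsto\sum_i x_i^{-q}$ and exploiting the strictly positive growth rates $\hat a_i>0$ to dominate the competition and jump terms, in the same spirit as the Lyapunov computation of Theorem \ref{finite moment}. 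A secondary technical point, since the drift is only locally Lipschitz with polynomial growth, is that the Gronwall estimate behind continuous dependence must be localized on the stopping times $\tau_m:=\inf\{t:|X^{x_k}(t)|\vee|X^{x}(t)|\ge m\}$, on which the coefficients are genuinely Lipschitz, letting $m\to\infty$ with the moment bounds controlling the tail. Combining the two-sided tightness with the Feller property, Krylov--Bogoliubov then produces the desired invariant probability measure.
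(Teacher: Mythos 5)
Your first half is, almost verbatim, the paper's own proof: the paper sets $\mu_T(A)=\frac{1}{T}\int_0^T\mathbb{P}(t,x,A)\,dt$, gets $\mu_T(B_r^c)\le K/r^p$ from Chebyshev and Theorem \ref{finite moment} with $p\in(0,1)$, and invokes Krylov--Bogoliubov from \cite{pz97}. Like you, it treats the bounded sets $B_r$ as the compacts, i.e.\ it implicitly works on the closed orthant; it does not even pause to verify the Feller property, which your localized Gronwall argument supplies in a standard way. Up to that point your proposal is correct and matches the paper.

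The genuine problem is the step you yourself flag as essential: the uniform negative-moment bound $\sup_{s\ge0}\mathbb{E}|X_i(s)|^{-q}\le K$ under ${\bf(A)}$ alone. This step would fail, because the bound is false in general. First, ${\bf(A)}$ does not give $\hat a_i>0$ (only $a_i(t)>0$ pointwise, with the infimum possibly $0$), and more importantly it imposes no relation between $a_i$ and $\sigma_i,\gamma_i$. The It\^o computation for $x_i^{-q}$ produces a drift of the form $x_i^{-q}\bigl[-qa_i(t)+\tfrac{q(q+1)}{2}\sigma_i^2(t)+\int_{\mathbb{Y}}\bigl((1+\gamma_i(t,u))^{-q}-1+q\gamma_i(t,u)\bigr)\lambda(du)\bigr]+qb_{ii}(t)x_i^{1-q}$, and both correction terms have the wrong sign; to dominate them you need precisely a condition like \eqref{eq03} (equivalently positivity of $R_i$ in \eqref{eq105}), which is not part of ${\bf(A)}$. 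Concretely, take $n=1$, $a\equiv0.1$, $b\equiv1$, $\sigma\equiv1$, $\gamma\equiv0$: assumption ${\bf(A)}$ holds, yet $\beta(t)\equiv a-\tfrac{1}{2}\sigma^2=-0.4<0$, so the extinction theorem at the end of Section 4 gives $X(t)\to0$ a.s.; a uniform bound on $\mathbb{E}X(t)^{-q}$ would, via Chebyshev, contradict this convergence, and moreover no invariant probability measure can charge the open orthant at all --- every invariant measure is boundary-supported, $\delta_0$ in particular. So the non-degenerate statement you are trying to prove is simply not a theorem under ${\bf(A)}$. The corollary must be read the way the paper's proof (and your first half) delivers it: existence of an invariant measure on the closed orthant, where $\{x:|x|\le r\}$ is compact. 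Ruling out boundary-supported measures genuinely requires extra hypotheses such as \eqref{eq03}, which is exactly what the paper imposes later (Theorems \ref{permanent} and \ref{distribution}) to get a unique invariant measure for the one-dimensional model.
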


\begin{proof}
Let $\mathbb{P}(t,x,A)$ be the transition probability measure of
$X(t,x)$, starting from $x$ at time $0$. Denote
\begin{equation*}
\mu_T(A):=\frac{1}{T}\int_0^T\mathbb{P}(t,x,A)dt
\end{equation*}
and $B_r:=\{x\in\mathbb{R}_+^n:|x|\leq r\}$ for $r\geq0$. In the
light of  Chebyshev's inequality and Theorem \ref{finite moment}
with $p\in(0,1)$,
\begin{equation*}
\mu_T(B_r^c)=\frac{1}{T}\int_0^T\mathbb{P}
(t,x,B_r^c)dt\leq\frac{1}{r^pT}\int_0^T
\mathbb{E}|X(t,x)|^pdt\leq\frac{K}{r^p},
\end{equation*}
and we have, for any $\epsilon>0$, $\mu_T(B_r)>1-\epsilon$ whenever
$r$ is large enough. Hence $\{\mu_T, T>0\}$ is tight. By
Krylov-Bogoliubov's theorem, e.g.,  \cite[Corollary3.1.2, p22]{pz97},
the conclusion follows immediately.
\end{proof}

\begin{definition}
{\rm The solution $X(t)$ of Eq. \eqref{eq42} is called
stochastically bounded, if for any $\epsilon\in(0,1)$, there is a
constant $H:=H(\epsilon)$ such that for any  $x_0\in\mathbb{R}^n_+$
\begin{equation*}
\limsup\limits_{t\rightarrow\infty}\mathbb{P}\{|X(t)|\leq
H\}\geq1-\epsilon.
\end{equation*}
}
\end{definition}

As an application of  Theorem \ref{finite moment}, together with the
Chebyshev inequality, we can also establish the following corollary.

\begin{corollary}\label{ultimate boundedness}
{\rm Under  assumption ${\bf(A)}$, the solution $X(t)$ of Eq.
\eqref{eq42} is stochastically bounded.}
\end{corollary}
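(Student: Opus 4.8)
The plan is to read off stochastic boundedness directly from the uniform $p$th-moment estimate already secured in Theorem \ref{finite moment}(1), using a single application of Chebyshev's inequality; this is precisely the route flagged in the sentence preceding the statement. Concretely, I would fix any exponent $p\in(0,1]$ (one may simply take $p=1$) and invoke Theorem \ref{finite moment}(1), which furnishes a constant $K$, independent of both $t$ and the initial datum $x_0$, satisfying $\sup_{t\in\mathbb{R}_+}\mathbb{E}|X(t)|^p\leq K$.

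Next, for an arbitrary threshold $H>0$ I would apply Chebyshev's inequality to the nonnegative random variable $|X(t)|^p$, obtaining, for every $t\geq0$,
\[
\mathbb{P}\{|X(t)|>H\}=\mathbb{P}\{|X(t)|^p>H^p\}\leq\frac{\mathbb{E}|X(t)|^p}{H^p}\leq\frac{K}{H^p}.
\]
Equivalently $\mathbb{P}\{|X(t)|\leq H\}\geq 1-K/H^p$, and since the constant $K$ does not depend on $t$, this lower bound is uniform in time. Given $\epsilon\in(0,1)$ I would then choose $H=H(\epsilon):=(K/\epsilon)^{1/p}$, so that $K/H^p=\epsilon$ and hence $\mathbb{P}\{|X(t)|\leq H\}\geq 1-\epsilon$ for all $t\geq0$. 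Passing to the $\limsup$ as $t\to\infty$ preserves this inequality, which is exactly the defining property in the Definition above. Because $H$ was selected using only $\epsilon$, $K$ and $p$, none of which depend on $x_0$, the same $H$ serves every initial condition, completing the argument.

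There is no genuine obstacle here: the entire analytic difficulty is absorbed into Theorem \ref{finite moment}, whose uniform-in-$t$ moment bound is the decisive ingredient, so the corollary reduces to bookkeeping. The only point meriting a moment's attention is to verify that the constant $K$ supplied by that theorem is independent of $t$ (and of $x_0$), which is exactly what the supremum over $t\in\mathbb{R}_+$ in \eqref{eq9501} guarantees; this independence is what allows the level $H$ to be fixed once and for all, rather than drifting with $t$.
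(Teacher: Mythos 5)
Your proof is correct and follows exactly the paper's route: the paper establishes this corollary with the single remark that it is an application of Theorem \ref{finite moment} together with the Chebyshev inequality, which is precisely your argument. One small caution: the proof of Theorem \ref{finite moment} actually yields $\mathbb{E}V(X(t))\leq e^{-t}V(x_0)+K(1-e^{-t})$, so the supremum-in-$t$ bound in \eqref{eq9501} does carry $x_0$-dependence; but since the definition of stochastic boundedness only constrains the $\limsup$ as $t\rightarrow\infty$, the term $e^{-t}V(x_0)$ vanishes in the limit and your choice of $H=(K/\epsilon)^{1/p}$, depending only on $\epsilon$, $K$, and $p$, still works for every initial condition.
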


For later applications, let us cite a strong law of large numbers
for local martingales, e.g., Lipster \cite{l80}, as the following
lemma.
\begin{lemma}\label{large numbers}
{\rm Let $M(t),t\geq0$, be a local martingale vanishing at time $0$
and define
\begin{equation*}
\rho_M(t):=\int_0^t\frac{d\langle M\rangle(s)}{(1+s)^2}, t\geq0,
\end{equation*}
where $\langle M\rangle(t):=\langle M,M\rangle(t)$ is Meyer's angle
bracket process. Then
\begin{equation*}
\lim\limits_{t\rightarrow\infty}\frac{M(t)}{t}=0 \mbox{ a.s.
provided that } \lim\limits_{t\rightarrow\infty}\rho_M(t)<\infty
\mbox{ a.s. }
\end{equation*}
}
\end{lemma}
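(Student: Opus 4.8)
The plan is to reduce the assertion to the almost-sure convergence of a time-weighted local martingale and then invoke a continuous-time Kronecker lemma. First I would introduce
\begin{equation*}
N(t):=\int_0^t\frac{dM(s)}{1+s},\qquad t\geq0,
\end{equation*}
which is again a local martingale vanishing at $0$. Since the integrand $1/(1+s)$ is predictable and locally bounded, its Meyer angle bracket is
\begin{equation*}
\langle N\rangle(t)=\int_0^t\frac{d\langle M\rangle(s)}{(1+s)^2}=\rho_M(t),
\end{equation*}
so that the hypothesis $\lim_{t\to\infty}\rho_M(t)<\infty$ a.s.\ is exactly the statement that $\langle N\rangle(\infty)<\infty$ almost surely.

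The central step is to prove that, on the event $\{\langle N\rangle(\infty)<\infty\}$, the limit $N(\infty):=\lim_{t\to\infty}N(t)$ exists and is finite almost surely. I would establish this by localization: setting $\tau_k:=\inf\{t\geq0:\langle N\rangle(t)\geq k\}$, the stopped process $N(\cdot\wedge\tau_k)$ is a square-integrable martingale with $\sup_t\mathbb{E}N(t\wedge\tau_k)^2=\sup_t\mathbb{E}\langle N\rangle(t\wedge\tau_k)$ bounded by (essentially) $k$, so it is $L^2$-bounded and therefore converges almost surely by the martingale convergence theorem. Since $\{\langle N\rangle(\infty)<\infty\}\subseteq\bigcup_k\{\tau_k=\infty\}$, letting $k\to\infty$ yields the a.s.\ convergence of $N$ on the whole event. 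This is precisely the content cited from Liptser \cite{l80}.

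Finally I would transfer this back to the growth of $M$ through Kronecker's lemma with the weight $b(s):=1+s$. Writing $M(t)=\int_0^t(1+s)\,dN(s)$ and integrating by parts gives
\begin{equation*}
\frac{M(t)}{1+t}=N(t)-\frac{1}{1+t}\int_0^tN(s)\,ds,
\end{equation*}
and, since $N(s)\to N(\infty)$, the Ces\`aro-type average on the right also tends to $N(\infty)$, so the difference tends to $0$ almost surely; because $t/(1+t)\to1$ this yields $M(t)/t\to0$ a.s.\ as required. I expect the only genuine difficulty to lie in the middle step in the presence of jumps: when $M$ is merely c\`adl\`ag the Dambis--Dubins--Schwarz time change for continuous martingales is unavailable, and one must instead control the possible jump of $\langle N\rangle$ at $\tau_k$ and argue through the localization-and-$L^2$ route above, which is exactly why the result is quoted rather than reproved here.
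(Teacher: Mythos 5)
You should know at the outset that the paper contains no proof of this lemma for you to be compared against: it is stated purely as a quotation of Liptser \cite{l80} (``let us cite a strong law of large numbers for local martingales\dots''), and is never reproved in the paper. Your proposal therefore supplies more than the paper does, and it follows the standard route by which this classical result is actually established: pass to the weighted local martingale $N(t)=\int_0^t(1+s)^{-1}dM(s)$, whose angle bracket is exactly $\rho_M$; prove a.s.\ convergence of $N$ on $\{\langle N\rangle(\infty)<\infty\}$; then finish with the continuous-time Kronecker argument, where your integration by parts is legitimate because $t\mapsto 1+t$ is continuous and of finite variation, so no covariation term appears and $N(s^-)$ may be replaced by $N(s)$ under the Lebesgue integral, after which the Ces\`aro step is elementary. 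The only thin point is the one you flag yourself: the estimate $\mathbb{E}\langle N\rangle(t\wedge\tau_k)\le k$ is not automatic, because the bracket, although predictable, may jump across the level $k$ at time $\tau_k$, so the stopped process need not be $L^2$-bounded by $k$; moreover, an announcing sequence $\sigma_j\uparrow\tau_k$ does not immediately close the gap either, since each $\sigma_j$ can be finite on $\{\tau_k=\infty\}$, so convergence of each $N^{\sigma_j}$ does not by itself give convergence of $N$ there. The clean repair, which is essentially what the cited reference provides, exploits predictability of $\langle N\rangle$ through Lenglart's domination inequality (predictable dominating process) applied to the tail increments $N(t)-N(s)$, dominated by $\langle N\rangle(t)-\langle N\rangle(s)$: this yields the a.s.\ Cauchy property of $N$ on $\{\langle N\rangle(\infty)<\infty\}$ with no $L^2$ bound needed. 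Since you explicitly identify this as the step for which the citation is responsible, your sketch is a correct and essentially complete account of the argument that the paper simply delegates to \cite{l80}.
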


\begin{remark}
{\rm Let
\begin{equation*}
\Psi^2_{\mbox{loc}}:=\left\{\Psi(t,z) \mbox{ predictable }
\Big|\int_0^t\int_{\mathbb{Y}}|\Psi(s,z)|^2\lambda(du)ds<\infty\right\}
\end{equation*}
and for $\Psi\in\Psi^2_{\mbox{loc}}$
\begin{equation*}
M(t):=\int_0^t\int_{\mathbb{Y}}\Psi(s,z)\tilde{N}(ds,du).
\end{equation*}
Then, by, e.g., Kunita \cite[Proposition 2.4]{k10}
\begin{equation*}
\langle
M\rangle(t)=\int_0^t\int_{\mathbb{Y}}|\Psi(s,z)|^2\lambda(du)ds
\mbox{ and } [M](t)=\int_0^t\int_{\mathbb{Y}}|\Psi(s,z)|^2N(ds,du),
\end{equation*}
where $ [M](t):=[M,M](t)$, square bracket process (or quadratic
variation process) of $M(t)$. }
\end{remark}

\begin{theorem}
{\rm Let assumption ${\bf(A)}$ hold. Assume further that for some
constant $\delta>-1$ and any $t\geq0$
\begin{equation}\label{eq78}
\gamma_i(t,u)\geq\delta,u\in \mathbb{Y}, i=1,\cdots,n,
\end{equation}
and there exists constant $K>0$ such that
\begin{equation}\label{eq99}
\int_0^t\int_{\mathbb{Y}}|\gamma(s,u)|^2\lambda(du)ds\leq Kt.
\end{equation}
 Then  the solution
$X(t),t\geq0$, of Eq. \eqref{eq42} has the property
\begin{equation}\label{eq16}
\limsup\limits_{t\rightarrow\infty}\frac{1}{t}\left[\ln(|X(t)|)+\frac{\min\limits_{1\leq
i\leq n}\hat{b}_{ii}}{\sqrt{n}}\int_0^t|X(s)|ds\right]\leq
\max_{1\leq i\leq n}\check{a}_i, \ \ \ \mbox{a.s.}
\end{equation}
}
\end{theorem}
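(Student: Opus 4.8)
The plan is to apply the It\^o formula to $\ln V(X(t))$, where $V(x):=\sum_{i=1}^n x_i$ denotes the $\ell^1$-norm on $\mathbb{R}^n_+$. By Theorem~\ref{positive solution} each component $X_i(t)$ stays strictly positive, hence $V(X(t))>0$ and $\ln V(X(t))$ is well defined. Writing the semimartingale $V(X(t))$ as $dV=b_V\,dt+\sigma_V\,dW+\int_{\mathbb{Y}}c_V(u)\,\tilde N(dt,du)$ with $b_V=\sum_i X_i(a_i-\sum_j b_{ij}X_j)$, $\sigma_V=\sum_i X_i\sigma_i$ and $c_V(u)=\sum_i X_i\gamma_i(t,u)$, the It\^o formula for jump processes yields
\begin{equation*}
\ln V(X(t))=\ln V(x_0)+\int_0^t\Big[\frac{b_V}{V}-\frac{\sigma_V^2}{2V^2}+\int_{\mathbb{Y}}\Big(\ln\big(1+\tfrac{c_V}{V}\big)-\tfrac{c_V}{V}\Big)\lambda(du)\Big]ds+M_1(t)+M_2(t),
\end{equation*}
where $M_1(t)=\int_0^t\frac{\sigma_V}{V}\,dW(s)$ and $M_2(t)=\int_0^t\int_{\mathbb{Y}}\ln\big(1+\tfrac{c_V}{V}\big)\,\tilde N(ds,du)$.

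Next I would bound the drift. Since $a_i\le\check a_i$, $b_{ij}\ge0$ and $b_{ii}\ge\hat b_{ii}$ by ${\bf(A)}$, one has $b_V/V\le\max_i\check a_i-\min_i\hat b_{ii}\,|X|^2/V$; combining this with $V=\sum_i X_i\le\sqrt n\,|X|$ (Cauchy--Schwarz) gives $|X|^2/V\ge|X|/\sqrt n$, so that $b_V/V\le\max_i\check a_i-\frac{\min_i\hat b_{ii}}{\sqrt n}|X|$. The diffusion correction $-\sigma_V^2/(2V^2)$ is nonpositive, and by \eqref{eq102} (equivalently $\ln(1+x)\le x$) the compensator integrand satisfies $\ln(1+c_V/V)-c_V/V\le0$; here $c_V/V\ge\delta>-1$ by \eqref{eq78} and $X_i>0$, so the logarithm is legitimate. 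Collecting these estimates and rearranging gives
\begin{equation*}
\ln V(X(t))+\frac{\min_i\hat b_{ii}}{\sqrt n}\int_0^t|X(s)|\,ds\le\ln V(x_0)+\big(\max_i\check a_i\big)t+M_1(t)+M_2(t).
\end{equation*}

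The crux is to show $M_1(t)/t\to0$ and $M_2(t)/t\to0$ almost surely through Lemma~\ref{large numbers}. For $M_1$, boundedness of the $\sigma_i$ gives $(\sigma_V/V)^2\le\check\sigma^2$, whence $\rho_{M_1}(t)\le\check\sigma^2\int_0^t(1+s)^{-2}ds<\infty$. The harder term is $M_2$: using \eqref{eq78} I would first establish the elementary inequality $|\ln(1+y)|\le C(\delta)|y|$ for all $y\ge\delta$, so that $[\ln(1+c_V/V)]^2\le C(\delta)^2(c_V/V)^2$; a weighted Cauchy--Schwarz step $(\sum_i X_i\gamma_i)^2\le V\sum_i X_i\gamma_i^2$ then yields $(c_V/V)^2\le\sum_i X_i\gamma_i^2/V\le|\gamma(s,u)|^2$, and hence by \eqref{eq99}
\begin{equation*}
\langle M_2\rangle(t)\le C(\delta)^2\int_0^t\int_{\mathbb{Y}}|\gamma(s,u)|^2\lambda(du)ds\le C(\delta)^2Kt.
\end{equation*}
Because $\langle M_2\rangle$ grows at most linearly, an integration by parts in $\rho_{M_2}(t)=\int_0^t(1+s)^{-2}d\langle M_2\rangle(s)$ shows $\lim_{t\to\infty}\rho_{M_2}(t)<\infty$ a.s., so Lemma~\ref{large numbers} applies.

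Finally, dividing the second display by $t$ and letting $t\to\infty$ gives
\begin{equation*}
\limsup_{t\to\infty}\frac1t\Big[\ln V(X(t))+\frac{\min_i\hat b_{ii}}{\sqrt n}\int_0^t|X(s)|\,ds\Big]\le\max_i\check a_i.
\end{equation*}
Since $|X|\le V$ on $\mathbb{R}^n_+$ we have $\ln|X(t)|\le\ln V(X(t))$, so replacing $\ln V$ by $\ln|X|$ only lowers the left-hand side and the claimed bound \eqref{eq16} follows. I expect the martingale analysis of the jump term $M_2$ to be the only delicate point, since it is precisely there that the linear growth bound on $\langle M_2\rangle$ --- hence hypotheses \eqref{eq78} and \eqref{eq99} --- enters.
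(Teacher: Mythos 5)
Your proposal is correct and follows essentially the same route as the paper's own proof: It\^o's formula applied to $\ln V(X(t))$ with $V(x)=\sum_{i=1}^n x_i$, the identical drift estimate obtained from $\sum_i b_{ii}x_i^2\geq \min_i\hat b_{ii}|x|^2$ and $V\leq\sqrt{n}\,|X|$, dropping the nonpositive diffusion and compensator corrections, and then invoking Lemma \ref{large numbers} once both martingale brackets are shown to grow at most linearly in $t$ via \eqref{eq78} and \eqref{eq99}. The only (immaterial) difference is the elementary bound used for the jump integrand: you use $|\ln(1+y)|\leq C(\delta)|y|$ for $y\geq\delta$, while the paper splits into cases to get $|\ln(1+H)|\leq-\ln(1+\delta)+|H|$; both yield $\langle M_2\rangle(t)\leq Kt$ and hence the same conclusion.
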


\begin{proof}
 For any $x\in \mathbb{R}^n_+$, let $
V(x)=\sum\limits_{i=1}^nx_i$, by It\^o's formula
\begin{equation*}
\begin{split}
\ln(V(X(t)))&\leq\ln(V(x_0))+\int_0^t\Big(X^T(s)(a(s)-B(s)X(s)/V(X(s))\\
&\ \ \ \ \ \ \ \ \ \ \ \ \ \ \ \ \ \ \ \ \ \ \ \ \ -(X^T(s)\sigma(s)
)^2/(2V^2(X(s)))\Big)ds\\
&\quad+\int_0^tX^T(s)\sigma(s)/V(X(s))dW(s)
+\int_0^t\int_{\mathbb{Y}}\ln(1+H(X(s^-),s,u))\tilde{N}(ds,du),
\end{split}
\end{equation*}
where
\begin{equation*}
H(x,t,u)= \left(\sum\limits_{i=1}^n\gamma_i(t,u)x_i\right)\Big/V(x).
\end{equation*}
Here we used the fact that $1+H>0$ and the inequality \eqref{eq102}.
Note from the inequality \eqref{eq101} and assumption ${\bf(A)}$
that
\begin{equation*}
\begin{split}
&X^T(s)(a(s)-B(s)X(s))/V(X(s))-(X^T(s)\sigma(s) )^2/(2V^2(X(s))\\
&\leq\frac{\sum\limits_{i=1}^na_i(s)X_i(s)}{\sum\limits_{i=1}^nX_i(s)}-\frac{\sum\limits_{i=1}^nX_i(s)\sum\limits_{j=1}^nb_{ij}(s)X_j(s)}{\sum\limits_{i=1}^nX_i(s)}\\
&\leq\max_{1\leq i\leq n}\check{a}_i-\frac{\min_{1\leq i\leq
n}\hat{b}_{ii}}{\sqrt{n}}|X(s)|.
\end{split}
\end{equation*}
Let
\begin{equation*}
M(t):=\int_0^tX^T(s)\sigma(s)/V(X(s))dW(s) \mbox{ and }
\tilde{M}(t):=\int_0^t\int_{\mathbb{Y}}\ln(1+H(X(s^-),s,u))\tilde{N}(ds,du).
\end{equation*}
Compute by the boundedness of $\sigma$ that
\begin{equation*}
\langle
M\rangle(t)=\int_0^t(X^T(s)\sigma(s))^2/V^2(X(s))ds\leq\int_0^t|\sigma(s)|^2ds\leq
Kt.
\end{equation*}
On the other hand, by assumption \eqref{eq78} and the definition of
$H$, for $x\in\mathbb{R}_+^n$ we obtain
\begin{equation*}
H(x,t,u)\geq\delta
\end{equation*}
and, in addition to \eqref{eq102}, for $-1<\delta\leq0$
\begin{equation*}
\begin{split}
|\ln(1+H(x,t,u))|&\leq|\ln(1+H(x,y,u))I_{\{\delta\leq
H(x,t,u)\leq0\}}|
+|\ln(1+H(x,y,u))I_{\{0\leq H(x,t,u)\}}|\\
&\leq-\ln(1+\delta)+|H(x,t,u)|.
\end{split}
\end{equation*}
This, together with \eqref{eq99}, gives that
\begin{equation*}
\begin{split}
\langle
\tilde{M}\rangle(t)&=\int_0^t\int_{\mathbb{Y}}(\ln(1+H(X(s),s,u)))^2\lambda(du)ds\\
&\leq2(-\ln(1+\delta))^2\lambda(\mathbb{Y})t+2\int_0^t\int_{\mathbb{Y}}H^2(X(s),s,u))\lambda(du)ds\\
&\leq2(-\ln(1+\delta))^2\lambda(\mathbb{Y})t+2\int_0^t\int_{\mathbb{Y}}|\gamma(t,u)|^2\lambda(du)ds\\
&\leq(2(-\ln(1+\delta))^2\lambda(\mathbb{Y})+K)t.
\end{split}
\end{equation*}
Then the strong law of large numbers, Lemma \ref{large numbers},
yields
\begin{equation*}
\frac{1}{t}M(t)\rightarrow0 \mbox{ a.s. and }
\frac{1}{t}\tilde{M}(t)\rightarrow0 \mbox{ as } t\rightarrow\infty,
\end{equation*}
and the conclusion follows.
\end{proof}

\section{Variation-of-Constants Formula and the Sample Lyapunov Exponents}
 In this part we
further discuss the long-term behaviors of model \eqref{eq42}.
 To begin, we
 obtain the following variation-of-constant formula for $1$-dimensional diffusion with jumps, which is
interesting in its own right.

\subsection{Variation-of-Constants Formula}

\begin{lemma}\label{variation-of-constant formula}
{\rm Let $F,G,f,g:\mathbb{R}_+\rightarrow\mathbb{R}$ and
$H,h:\mathbb{R}_+\times\mathbb{Y}\rightarrow\mathbb{R}$ be
Borel-measurable and bounded functions with property $H>-1$, and
$Y(t)$
satisfy 
\begin{equation}\label{eq3}
\begin{split}
dY(t)&=[F(t)Y(t)+f(t)]dt+[G(t)Y(t)+g(t)]dW(t)\\
&\quad +\int_{\mathbb{Y}}[Y(t^-)H(t,u)+h(t,u)]\tilde{N}(dt,du),\\
Y(0)&=Y_0.
\end{split}
\end{equation}
Then the solution can be explicitly expressed as:
\begin{equation*}
\begin{split}
Y(t)&=\Phi(t)\Big(Y_0+\int_0^t\Phi^{-1}(s)\Big[\Big(f(s)-G(s)g(s)-
\int_{\mathbb{Y}}\frac{H(s,u)h(s,u)}{1+H(s,u)}\lambda(du)\Big)ds\\
&\quad
+g(s)dW(s)+\int_{\mathbb{Y}}\frac{h(s,u)}{1+H(s,u)}\tilde{N}(ds,du)\Big]\Big),
\end{split}
\end{equation*}
where
\begin{equation*}
\begin{split}
\Phi(t)&:=\exp\Big[\int_0^t\Big(F(s)-\frac{1}{2}G^2(s)+
\int_{\mathbb{Y}}[\ln(1+H(s,u))-H(s,u)]\lambda(du)\Big)ds\\
&\quad+\int_0^tG(s)dW(s)
+\int_0^t\int_{\mathbb{Y}}\ln(1+H(s,u))\tilde{N}(ds,du)\Big]
\end{split}
\end{equation*}
is the fundamental solution of corresponding homogeneous linear
equation
\begin{equation}\label{eq8}
dZ(t)=F(t)Z(t)dt+G(t)Z(t)dW(t)+Z(t^-)\int_{\mathbb{Y}}H(t,u)\tilde{N}(dt,du).
\end{equation}
}
\end{lemma}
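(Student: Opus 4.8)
The plan is to use the classical variation-of-constants scheme adapted to the jump setting: first confirm that $\Phi$ is the fundamental solution, then pass to the new unknown $U(t):=\Phi^{-1}(t)Y(t)$ and show that $U$ obeys an SDE whose right-hand side no longer involves the unknown, and finally integrate back.

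First I would verify that $\Phi$ solves the homogeneous equation \eqref{eq8}. Writing $\Phi(t)=\exp(A(t))$ with $A$ the exponent in the definition, I apply It\^o's formula for semimartingales with jumps to $e^{A(t)}$. The continuous quadratic variation $d\langle A^c\rangle(t)=G^2(t)\,dt$ produces the $\tfrac12 G^2$ term that cancels the $-\tfrac12 G^2$ inside the drift of $A$, while each jump of size $\Delta A=\ln(1+H)$ contributes the exact increment $\Phi(t^-)(e^{\Delta A}-1-\Delta A)=\Phi(t^-)(H-\ln(1+H))$ against $N(dt,du)$. Converting the compensated integral $\int\ln(1+H)\tilde N$ into $\int\ln(1+H)N-\int\ln(1+H)\lambda\,dt$ and collecting all the $\lambda(du)\,dt$ contributions, the logarithmic terms cancel and one is left precisely with $d\Phi=F\Phi\,dt+G\Phi\,dW+\Phi(t^-)\int_{\mathbb Y}H\tilde N(dt,du)$, i.e.\ \eqref{eq8}. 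The same computation applied to $\Phi^{-1}=e^{-A}$ yields the dynamics
\begin{equation*}
d\Phi^{-1}(t)=\Phi^{-1}(t^-)\Big[\Big(-F+G^2+\int_{\mathbb Y}\tfrac{H^2}{1+H}\lambda(du)\Big)dt-G\,dW+\int_{\mathbb Y}\tfrac{-H}{1+H}\tilde N(dt,du)\Big],
\end{equation*}
where the factor $(1+H)^{-1}$ arises from $e^{-\ln(1+H)}$, and the $G^2$ and $H^2/(1+H)$ drift terms are the usual corrections attached to inverting a stochastic exponential.

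Next I would compute $dU$ for $U=\Phi^{-1}Y$ via the It\^o product rule $d(\Phi^{-1}Y)=\Phi^{-1}(t^-)\,dY+Y(t^-)\,d\Phi^{-1}+d[\Phi^{-1},Y]$. The covariation splits into a continuous Brownian part $-\Phi^{-1}G(GY+g)\,dt$ and a jump part $\sum\Delta\Phi^{-1}\,\Delta Y$, whose jumps at $(t,u)$ equal $\Phi^{-1}(t^-)\tfrac{-H}{1+H}(Y(t^-)H+h)$ integrated against $N(dt,du)$. Substituting $dY$ from \eqref{eq3} and $d\Phi^{-1}$ from above and grouping terms by type, every contribution carrying the unknown $Y$ cancels: in the drift the $FY$ terms cancel, the $G^2Y$ terms cancel against the covariation, and the $Y\int H^2/(1+H)\,\lambda$ term cancels against the $\lambda\,dt$ part of the jump covariation; in the $dW$ channel $GY$ cancels; and in the compensated-jump channel the integrand collapses algebraically from $(YH+h)+Y\tfrac{-H}{1+H}+\tfrac{-H}{1+H}(YH+h)$ down to $\tfrac{h}{1+H}$. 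What survives is the closed equation
\begin{equation*}
dU(t)=\Phi^{-1}(t)\Big[\Big(f-Gg-\int_{\mathbb Y}\tfrac{Hh}{1+H}\lambda(du)\Big)dt+g\,dW+\int_{\mathbb Y}\tfrac{h}{1+H}\tilde N(dt,du)\Big].
\end{equation*}
Integrating from $0$ to $t$, using $\Phi(0)=1$ so that $U(0)=Y_0$, and multiplying back by $\Phi(t)$ gives the asserted representation.

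I expect the main obstacle to be the bookkeeping in the jump terms, specifically the algebraic collapse of the compensated-jump integrand to $h/(1+H)$ together with the matching cancellation of its $\lambda\,dt$ compensator against the drift; keeping the raw measure $N$ and the compensated measure $\tilde N$ straight, so that the surviving stochastic integrals are genuine martingales and no spurious drift is introduced, is where errors are easiest to make. A minor technical point is integrability: the hypotheses that $F,G,f,g,H,h$ are bounded, $H>-1$, and $\lambda(\mathbb Y)<\infty$ ensure that $\Phi^{\pm1}$ and all the stochastic integrals above are well defined, so each application of It\^o's formula and the associated martingale manipulations are legitimate.
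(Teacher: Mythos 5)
Your proof is correct, and the two places where it could go wrong in fact check out: the dynamics you assert for $\Phi^{-1}$ (with the $G^2$ and $\int H^2/(1+H)\,\lambda(du)$ corrections) and the algebraic collapse of the compensated-jump integrand to $h/(1+H)$ are both right, as is the cancellation of every $Y$-dependent term in the drift, Brownian, and jump channels. Your route is genuinely different from the paper's, however. The paper never transforms the unknown and never computes $d\Phi^{-1}$: it posits the ansatz $Y(t)=\Phi(t)\bigl(Y_0+\int_0^t\Phi^{-1}(s)[\bar f\,ds+\bar g\,dW+\int_{\mathbb Y}\bar h\,\tilde N(ds,du)]\bigr)$ with \emph{undetermined} coefficients $\bar f,\bar g,\bar h$, applies the It\^o product rule to $\Phi\cdot\bar Y$ (which needs only $d\Phi$ and the covariation $d[\Phi,\bar Y]=G\bar g\,dt+\int_{\mathbb Y}H\bar h\,N(dt,du)$), and then matches coefficients against \eqref{eq3}, solving the linear system $\bar g=g$, $(1+H)\bar h=h$, $\bar f+G\bar g+\int_{\mathbb Y}H\bar h\,\lambda(du)=f$. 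That construction produces \emph{a} solution of \eqref{eq3} in the desired form, so the paper must separately invoke uniqueness of solutions to \eqref{eq3} (cited from {\O}ksendal and Sulem) to conclude the representation. Your approach pays for the extra It\^o computation on the inverse stochastic exponential $\Phi^{-1}$, but buys a logically stronger conclusion: since $U=\Phi^{-1}Y$ satisfies a closed, $Y$-free SDE, \emph{every} solution of \eqref{eq3} must equal the stated formula, and no appeal to a uniqueness theorem is needed at that step; you also verify that $\Phi$ solves \eqref{eq8}, which the paper asserts without computation. One shared soft spot, not a defect of your argument relative to the paper's: boundedness of $H$ together with $H>-1$ does not by itself bound $1/(1+H)$ or $\ln(1+H)$ unless $H$ stays away from $-1$, and both proofs implicitly rely on this for the integrability of the transformed coefficients.
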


\begin{proof}
 Noting that
\begin{equation*}
\begin{split}
\Phi(t)&=\exp\Big[\int_0^t\Big(F(s)-\frac{1}{2}G^2(s)+
\int_{\mathbb{Y}}[\ln(1+H(s,u))-H(s,u)]\lambda(du)\Big)ds\\
&\quad +\int_0^tG(s)dW(s)
+\int_0^t\int_{\mathbb{Y}}\ln(1+H(s,u))\tilde{N}(ds,du)\Big]
\end{split}
\end{equation*}
is the fundamental solution to Eq. \eqref{eq8}, we then have
\begin{equation}\label{eq4}
d\Phi(t)=F(t)\Phi(t)dt
+G(t)\Phi(t)dW(t)+\Phi(t^-)\int_{\mathbb{Y}}H(t,u)\tilde{N}( dt,du).
\end{equation}
By \cite[Theorem 1.19, p10]{os05}, Eq. \eqref{eq3} has a unique
solution $Y(t), t\geq0$.
We assume that
\begin{equation*}
Y(t)=\Phi(t)\left(Y(0)+\int_0^t\Phi^{- 1}(s)\left[\bar{f}(s)ds
+\bar{g}(s)dW(s)+\int_{\mathbb{Y}}\bar{h}(s,u)\tilde{N}(d
s,du)\right]\right),
\end{equation*}
where $\bar{f}$, $\bar{g}$, and $\bar{h}$ are functions to be
determined. Let
\begin{equation*}
\bar{Y}(t)=Y(0)+\int_0^t\Phi^{-
1}(s)\left[\bar{f}(s)ds+\bar{g}(s)dW(s)
+\int_{\mathbb{Y}}\bar{h}(s,u)\tilde{N}(d s,du)\right],
\end{equation*}
which means
\begin{equation}\label{eq5}
d\bar{Y}(t)=\Phi^{- 1}(t)\left[\bar{f}(t)dt
+\bar{g}(t)dW(t)+\int_{\mathbb{Y}}\bar{h}(t,u)\tilde{N}(d
t,du)\right].
\end{equation}
Observing that $\Phi$ and $\bar{Y}$ are real-valued L\'{e}vy type
stochastic integrals, by It\^o's product formula, e.g.,
\cite[Theorem 4.4.13, p231]{a09}, we can deduce that
\begin{equation}\label{eq6}
dY(t)=\Phi(t^-)d\bar{Y}(t)+\bar{Y}(t^-)d\Phi(t)+d[\Phi,\bar{Y}](t),
\end{equation}
where $[\Phi,\bar{Y}]$ is the cross quadratic variation of processes
$\Phi$ and $\bar{Y}$, and by $(4.14)$ in \cite[p230]{a09}
\begin{equation}\label{eq7}
d[\Phi,\bar{Y}](t)=G(t)\bar{g}(t)dt+\int_{\mathbb{Y}}H(t,u)\bar{h}(t,u)N(dt,du).
\end{equation}
Putting \eqref{eq4}, \eqref{eq5}, and \eqref{eq7} into \eqref{eq6},
we deduce that
\begin{equation*}
\begin{split}
dY(t)&=\left[\bar{f}(t)dt+\bar{g}(t)dW(t)+\int_{\mathbb{Y}}\bar{h}(t,u)\tilde{N}(dt,d
u)\right]\\
&\quad+F(t)Y(t)dt+G(t)Y(t)dW(t)+Y(t^-)\int_{\mathbb{Y}}H(t,u)\tilde{N}(dt,du)\\
&\quad+G(t)\bar{g}(t)dt+\int_{\mathbb{Y}}H(t,u)\bar{h}(t,u)N(dt,du)\\
&=\left[\bar{f}(t)+F(t)Y(t)+G(t)\bar{g}(t)+\int_{\mathbb{Y}}H(t,u)\bar{h}
(t,u)\lambda(du)\right]dt+[\bar{g}(t)+G(t)Y(t)]dW(t)\\
&\quad
+\int_{\mathbb{Y}}\left[\bar{h}(t,u)+Y(t^-)H(t,u)+H(t,u)\bar{h}(t,u)\right]\tilde
{N}(dt,du).
\end{split}
\end{equation*}
Setting
\begin{equation*}
\bar{f}(t)+G(t)\bar{g}(t)
+\int_{\mathbb{Y}}H(t,u)\bar{h}(t,u)\lambda(du)= f(t)
\end{equation*}
and
\begin{equation*}
\bar{g}(t)=g(t) \mbox{ and } \bar{h}(t,u)+H(t,u)\bar{h}(t,u)=h(t,u),
\end{equation*}
hence we derive that
\begin{equation*}
\bar{f}(t)=f(t)-G(t)g(t)-
\int_{\mathbb{Y}}\frac{H(t,u)h(t,u)}{1+H(t,u)}\lambda(du),
\bar{g}(t)=g(t) \mbox { and } \bar{h}(t,u)=\frac{h(t,u)}{1+H(t,u)}
\end{equation*}
and the required expression follows.
\end{proof}

\subsection{One Dimensional Competitive Model}

In what follows,  we shall  study some properties of processes $Y_i(t)$ defined by \eqref{eq103}, which is actually one dimensional competitive model.

\begin{lemma}\label{explicit solution}
{\rm Under assumption ${\bf(A)}$, Eq. \eqref{eq103} admits a unique
positive solution $Y_i(t),t\geq0$, which admits the explicit formula
\begin{equation}\label{eq9}
Y_i(t)=\frac{\Phi_i(t)}
{\frac{1}{X_i(0)}+\int_0^t\Phi_i(s)b_{ii}(s)ds},
\end{equation}
where
\begin{equation*}
\begin{split}
\Phi_i(t)&:=\exp\Big(\int_0^t\Big[a_i(s)-\frac{1}{2}\sigma_i^2(s)+
\int_{\mathbb{Y}}(\ln(1+\gamma_i(s,u))-\gamma_i(s,u))\lambda(du)\Big]ds\\
&\quad+\int_0^t\sigma_i(s)dW(s)
+\int_0^t\int_{\mathbb{Y}}\ln(1+\gamma_i(s,u))\tilde{N}(ds,du)\Big).
\end{split}
\end{equation*}
}
\end{lemma}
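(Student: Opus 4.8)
The plan is to reduce the logistic-type equation \eqref{eq103} to a genuinely linear SDE by the substitution $U_i(t):=1/Y_i(t)$ and then invoke the variation-of-constants formula of Lemma \ref{variation-of-constant formula}. Existence and uniqueness of a local positive solution follow from the local Lipschitz property of the coefficients together with the general theory cited above, so the substantive task is to produce the closed-form expression \eqref{eq9} and to read off positivity and non-explosion from it.

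First I would apply It\^o's formula for jump-diffusions to $\phi(y)=1/y$ with $\phi'(y)=-1/y^2$, $\phi''(y)=2/y^3$. The continuous part is routine and yields the drift $(-a_i(t)+\sigma_i^2(t))U_i(t)+b_{ii}(t)$ and diffusion $-\sigma_i(t)U_i(t)$. The crucial observation concerns the jumps: the multiplicative structure $\Delta Y_i=Y_i(t^-)\gamma_i(t,u)$ sends $Y_i(t^-)$ to $Y_i(t^-)(1+\gamma_i(t,u))$, hence $U_i(t^-)$ to $U_i(t^-)/(1+\gamma_i(t,u))$. Reading off the jump coefficient and its $\tilde N$-compensator gives
\[
dU_i=\Big[\tilde F(t)U_i(t)+b_{ii}(t)\Big]dt-\sigma_i(t)U_i(t)\,dW(t)+U_i(t^-)\int_{\mathbb Y}\tilde H(t,u)\tilde N(dt,du),
\]
with $\tilde G(t)=-\sigma_i(t)$, $\tilde H(t,u)=-\gamma_i(t,u)/(1+\gamma_i(t,u))$, and $\tilde F(t)=-a_i(t)+\sigma_i^2(t)+\int_{\mathbb Y}\gamma_i^2(t,u)/(1+\gamma_i(t,u))\,\lambda(du)$. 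This is precisely the form \eqref{eq3} with $f=b_{ii}$ and $g=h=0$.

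Next I would apply Lemma \ref{variation-of-constant formula}. Since $g=h=0$, the formula collapses to $U_i(t)=\tilde\Phi(t)\big(U_i(0)+\int_0^t\tilde\Phi^{-1}(s)b_{ii}(s)\,ds\big)$, where $\tilde\Phi$ is built from $\tilde F,\tilde G,\tilde H$. The one real computation is the identity $\tilde\Phi(t)=1/\Phi_i(t)$, which rests on two facts: $\ln(1+\tilde H)=-\ln(1+\gamma_i)$, so the two stochastic integrals in the exponent flip sign against those of $\Phi_i$; and the cancellation $\int_{\mathbb Y}\gamma_i^2/(1+\gamma_i)\,\lambda(du)+\int_{\mathbb Y}\gamma_i/(1+\gamma_i)\,\lambda(du)=\int_{\mathbb Y}\gamma_i\,\lambda(du)$, which collapses the drift exponent of $\tilde\Phi$ to the exact negative of that of $\Phi_i$. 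Inverting $U_i$ then produces
\[
Y_i(t)=\frac{1}{U_i(t)}=\frac{\Phi_i(t)}{\tfrac{1}{X_i(0)}+\int_0^t\Phi_i(s)b_{ii}(s)\,ds},
\]
which is \eqref{eq9}. Positivity and non-explosion are immediate consequences: $\Phi_i(t)>0$ as an exponential, and since $b_{ii}>0$ by ${\bf(A)}$ the denominator is bounded below by $1/X_i(0)>0$ and stays finite on bounded intervals, so $Y_i(t)\in(0,\infty)$ for all $t\geq0$.

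The step I expect to be delicate is the jump bookkeeping in It\^o's formula --- correctly translating the multiplicative jump $Y^-\gamma_i$ into the jump coefficient $-\gamma_i/(1+\gamma_i)$ for $U_i$ together with its compensator contribution to $\tilde F$ --- and the ensuing verification that the exponent of $\tilde\Phi$ is the precise negative of that of $\Phi_i$, where the algebraic cancellation noted above is exactly what makes the two expressions reciprocal. A secondary point needing attention is that Lemma \ref{variation-of-constant formula} requires $\tilde H$ bounded with $\tilde H>-1$: the inequality $\tilde H>-1$ holds automatically since $1+\gamma_i>0$, while boundedness of $\tilde H$ uses that $\gamma_i$ is bounded and bounded away from $-1$ (as under \eqref{eq78}); absent such a lower bound one may instead confirm \eqref{eq9} directly by applying It\^o's formula to the right-hand side and checking that it satisfies \eqref{eq103}.
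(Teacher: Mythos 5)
Your proposal follows essentially the same route as the paper's proof: substitute $\bar Y_i=1/Y_i$, apply It\^o's formula for jump-diffusions to obtain a linear SDE of the form \eqref{eq3} with $f=b_{ii}$, $g=h=0$ (your drift, diffusion and jump coefficients $-\gamma_i/(1+\gamma_i)$ agree exactly with the paper's equation \eqref{eq2}), and then invoke Lemma \ref{variation-of-constant formula} and invert to get \eqref{eq9}. Your write-up is in fact more detailed than the paper's --- it verifies the cancellation giving $\tilde\Phi=1/\Phi_i$ explicitly and flags the legitimate point, glossed over in the paper, that the lemma's boundedness hypothesis on the transformed jump coefficient needs $\gamma_i$ bounded away from $-1$ (or else a direct It\^o verification of \eqref{eq9}).
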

\begin{proof}
It is easy to see that $\Phi_i(t)$ is integrable in any finite interval, hence $Y_i(t)$ will never reach $0$. Letting $\bar{Y}_i(t):=\frac{1}{Y_i(t)}$ and applying the It\^o
formula we have
\begin{equation*}
\begin{split}
d\bar{Y}_i(t)&=-\frac{1}{Y_i^2(t)}Y_i(t)[(a_i(t)-
b_{ii}(t)Y_i(t))dt+\sigma_i(t)dW(t)]+\frac{1}{2}
\frac{2}{Y_i^3(t)}\sigma_i^2(t)Y_i^2(t)dt\\
&\quad+\int_{\mathbb{Y}}\left[\frac{1}{(1+\gamma_i(t,u))Y_i(t)}-
\frac{1}{Y_i(t)}+\frac{1}{Y_i^2(t)}Y_i(t)\gamma_i(t,u)\right]\lambda(du)dt\\
&\quad+ \int_{\mathbb{Y}}\left[\frac{1}{(1+\gamma_i(t,u))Y_i(t^-)}-
\frac{1}{Y_i(t^-)}\right]\tilde{N}(dt,du),
\end{split}
\end{equation*}
that is,
\begin{equation}\label{eq2}
\begin{split}
d\bar{Y}(t)&=\bar{Y}(t^-)\Big[\Big(\sigma_i^2(t)-a_i(t)+
\int_{\mathbb{Y}}\Big(\frac{1}{1+\gamma_i(t,u)}-1+
\gamma_i(t,u)\Big)\lambda(du)\Big)dt-\sigma_i(t)dW(t)\\
&\quad+\int_{\mathbb{Y}}\Big(\frac{1}{1+\gamma_i(t,u)}-
1\Big)\tilde{N}(dt,du)\Big]+b_{ii}(t)dt.
\end{split}
\end{equation}
By Lemma \ref{variation-of-constant formula}, Eq. \eqref{eq2} has an
explicit
solution and the conclusion \eqref{eq9} follows.
\end{proof}

\begin{definition}
{\rm The solution of Eq. \eqref{eq103} is said to be stochastically
permanent if for any $\epsilon\in(0,1)$ there exit positive
constants $H_1:=H_1(\epsilon)$ and $H_2:=H_2(\epsilon)$ such that
\begin{equation*}
\liminf\limits_{t\rightarrow\infty}\mathbb{P}\{Y_i(t)\leq
H_1\}\geq1-\epsilon \mbox{ and
}\liminf\limits_{t\rightarrow\infty}\mathbb{P}\{Y_i(t)\geq
H_2\}\geq1-\epsilon.
\end{equation*}
}
\end{definition}

\begin{theorem}\label{permanent}
{\rm Let assumption ${\bf(A)}$ hold. Assume further that there
exists constant $c_1>0$ such that, for any $t\geq0$ and
$i=1,\cdots,n$,
\begin{equation}\label{eq03}
a_i(t)-\sigma^2_i(t)-
\int_{\mathbb{Y}}\frac{\gamma_i^2(t,u)}{1+\gamma_i(t,u)}\lambda(du)\geq
c_1,
\end{equation}
then the solution $Y_i(t),t\geq0$ of  Eq. \eqref{eq103} is
stochastically permanent.}
\end{theorem}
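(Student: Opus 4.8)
The plan is to establish the two one-sided tail estimates in the definition of stochastic permanence by combining a uniform moment bound with Chebyshev's inequality. The upper tail is routine, whereas the lower tail---keeping $Y_i$ bounded away from $0$---is where condition \eqref{eq03} enters and constitutes the main work.

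For the upper bound I would first show $\sup_{t\ge0}\mathbb{E}[Y_i(t)^p]\le K$ for some $p\in(0,1]$. Since $Y_i$ solves the one-dimensional equation \eqref{eq103}, applying It\^o's formula to $V(y)=y^p$ reproduces the computation of Theorem \ref{finite moment}: the jump integral $\int_{\mathbb{Y}}[(1+\gamma_i)^p-1-p\gamma_i]\lambda(du)$ is nonpositive by \eqref{eq100}, while the $-pb_{ii}y^{p+1}$ term dominates for large $y$, so $V+\mathcal{L}V$ is bounded and $\sup_t\mathbb{E}[Y_i(t)^p]\le K$ follows. Chebyshev's inequality then gives $\mathbb{P}\{Y_i(t)>H_1\}\le KH_1^{-p}$, so choosing $H_1$ large yields $\liminf_{t\to\infty}\mathbb{P}\{Y_i(t)\le H_1\}\ge1-\epsilon$.

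For the lower bound I would prove $\sup_{t\ge0}\mathbb{E}[Y_i(t)^{-\theta}]\le K$ for a sufficiently small $\theta>0$. The starting point is equation \eqref{eq2} for $\bar{Y}_i=1/Y_i$ from the proof of Lemma \ref{explicit solution}; using $\frac{1}{1+\gamma_i}-1+\gamma_i=\frac{\gamma_i^2}{1+\gamma_i}$, its multiplicative drift coefficient simplifies to $\beta_i(t):=\sigma_i^2(t)-a_i(t)+\int_{\mathbb{Y}}\frac{\gamma_i^2(t,u)}{1+\gamma_i(t,u)}\lambda(du)$, which by \eqref{eq03} satisfies $\beta_i(t)\le-c_1$ for all $t$. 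The key step is to apply It\^o's formula to $U(u)=(1+u)^\theta$ (rather than $u^\theta$, so that the $b_{ii}$ contribution $\theta(1+u)^{\theta-1}b_{ii}(t)$ stays bounded near $u=0$) and to show $\kappa U(u)+\mathcal{L}U(u)\le K$ for suitably small $\kappa,\theta>0$. Examining the coefficient of $u^\theta$ as $u\to\infty$, the dominant term is $\theta\beta_i(t)\le-\theta c_1$, the diffusion correction $\frac{1}{2}\theta(\theta-1)\sigma_i^2$ is negative, and the jump contribution $\int_{\mathbb{Y}}[(1+\gamma_i)^{-\theta}-1+\theta\frac{\gamma_i}{1+\gamma_i}]\lambda(du)$ is nonpositive to leading order in $\theta$, since $\frac{\gamma}{1+\gamma}-\ln(1+\gamma)\le0$ for $\gamma>-1$. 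Hence for $\theta$ small this coefficient is strictly negative uniformly in $t$, forcing $\kappa U+\mathcal{L}U\to-\infty$ as $u\to\infty$ while remaining bounded for moderate $u$. Integrating $\frac{d}{dt}\mathbb{E}[e^{\kappa t}U(\bar{Y}_i(t))]\le Ke^{\kappa t}$ then yields $\sup_t\mathbb{E}[(1+\bar{Y}_i(t))^\theta]\le K$, whence $\mathbb{E}[Y_i(t)^{-\theta}]\le K$. Chebyshev gives $\mathbb{P}\{Y_i(t)<H_2\}\le KH_2^\theta$, and choosing $H_2$ small yields $\liminf_{t\to\infty}\mathbb{P}\{Y_i(t)\ge H_2\}\ge1-\epsilon$.

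The main obstacle will be the generator estimate in the lower-bound step: one must verify, uniformly in $t$ and with explicit control of the $\theta$-dependence, that the jump integral $\int_{\mathbb{Y}}[(1+\gamma_i)^{-\theta}-1+\theta\frac{\gamma_i}{1+\gamma_i}]\lambda(du)$ does not overwhelm the negative drift $\theta\beta_i(t)$. This relies on the boundedness of $\gamma_i$ from assumption ${\bf(A)}$, the constraint $\gamma_i>-1$ together with the finiteness of $\int_{\mathbb{Y}}\frac{\gamma_i^2}{1+\gamma_i}\lambda(du)$ forced by \eqref{eq03}, and $\lambda(\mathbb{Y})<\infty$, which together keep the integral of order $\theta$ with a nonpositive leading coefficient; choosing $\theta$ small, and then $\kappa$ smaller still, makes the net coefficient of $u^\theta$ negative and secures the uniform bound.
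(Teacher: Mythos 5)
Your proposal is correct, but your lower-tail argument takes a genuinely different route from the paper's. The paper does not run a Lyapunov/generator estimate on $1/Y_i$ at all: it starts from the explicit variation-of-constants representation \eqref{eq02} of $1/Y_i(t)$ (Lemma \ref{explicit solution}), observes that the Dol\'eans-Dade exponential appearing there is a (local) martingale, and takes expectations to get a closed-form expression for $\mathbb{E}[1/Y_i(t)]$; condition \eqref{eq03} then makes the first term decay like $e^{-c_1t}$ and keeps the integrated $b_{ii}(s)e^{-c_1(t-s)}$ term bounded, so $\sup_{t\ge0}\mathbb{E}[1/Y_i(t)]\le K$, and Chebyshev finishes exactly as in your last display. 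Your route instead reads off from the SDE \eqref{eq2} the multiplicative drift $\beta_i(t)=\sigma_i^2(t)-a_i(t)+\int_{\mathbb{Y}}\gamma_i^2/(1+\gamma_i)\,\lambda(du)\le-c_1$ and bounds a fractional inverse moment via $U(u)=(1+u)^\theta$; this is weaker than the paper's first-moment bound but entirely sufficient for Chebyshev, and it buys independence from the closed-form solution and from the martingale property of the stochastic exponential (a point the paper actually glosses over: it writes equality when taking expectations of a local martingale, though the supermartingale inequality would suffice and points in the needed direction), so it would survive in settings with no explicit solution. Moreover, the ``main obstacle'' you flag resolves more cleanly than you anticipate: for $0<\theta\le1$ the integrand $(1+\gamma)^{-\theta}-1+\theta\gamma/(1+\gamma)$ is nonpositive pointwise for every $\gamma>-1$ (its $\gamma$-derivative is $\theta(1+\gamma)^{-2}\bigl[1-(1+\gamma)^{1-\theta}\bigr]$, so the maximum is at $\gamma=0$ where it vanishes); equivalently, concavity of $u\mapsto(1+u)^\theta$ makes the entire compensated jump term in the generator nonpositive, so no expansion in $\theta$ or uniform-in-$t$ control of that integral is needed, and \eqref{eq03} enters only through $\beta_i\le-c_1$. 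The upper tail is handled identically in both arguments (Theorem \ref{finite moment} plus Chebyshev, i.e.\ Corollary \ref{ultimate boundedness}).
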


\begin{proof} The first part of the proof follows by the Chebyshev
inequality and Corollary \ref{ultimate boundedness}. Observe that
\eqref{eq9} can be rewritten in the form
\begin{equation}\label{eq02}
\begin{split}
\frac{1}{Y_i(t)}&=\frac{1}{X_i(0)}\exp\Big(\int_0^t-\Big[a_i(s)-\frac{1}{2}\sigma^2_i(s)+
\int_{\mathbb{Y}}(\ln(1+\gamma_i(s,u))-\gamma_i(s,u))\lambda(du)\Big]ds\\
&\quad-\int_0^t\sigma_i(s)dW(s)
-\int_0^t\int_{\mathbb{Y}}\ln(1+\gamma_i(s,u))\tilde{N}(ds,du)\Big)\\
&\quad+\int_0^tb_{ii}(s)\exp\Big(\int_s^t-\Big[a(r)-\frac{1}{2}\sigma^2_i(r)+
\int_{\mathbb{Y}}(\ln(1+\gamma_i(r,u))-\gamma_i(r,u))\lambda(du)\Big]dr\\
&\quad-\int_s^t\sigma_i(r)dW(r)
-\int_s^t\int_{\mathbb{Y}}\ln(1+\gamma_i(r,u))\tilde{N}(dr,du)\Big)ds.
\end{split}
\end{equation}
By, e.g., \cite[Corollary 5.2.2, p253]{a09}, we notice that
\begin{equation*}
\begin{split}
\exp\Big(&-\frac{1}{2}\int_0^t\sigma_i^2(s)ds-
\int_0^t\int_{\mathbb{Y}}\Big(\frac{1}{1+\gamma_i(s,u)}-
1+\ln(1+\gamma_i(s,u))\Big)\lambda(du)ds\\
&\quad-\int_0^t\sigma_i(s)
dW(s)-\int_0^t\int_{\mathbb{Y}}\ln(1+\gamma_i(s,u))\tilde{N}(ds,du)\Big)
\end{split}
\end{equation*}
is a local martingale. Hence letting
$\bar{M}_i(t):=\frac{1}{Y_i(t)}$ and taking expectations on both
sides of \eqref{eq02} leads to
\begin{equation*}
\begin{split}
\mathbb{E}\bar{M}_i(t)&=\frac{1}{X_i(0)}\exp\Big(-\int_0^t\Big[a_i(s)-\sigma^2_i(s)-
\int_{\mathbb{Y}}\frac{\gamma_i^2(s,u)}{1+\gamma_i(s,u)}\lambda(du)\Big]ds\\
&\quad+\int_0^tb_{ii}(s)\exp\Big(-\int_s^t\Big[a_i(r)-\sigma^2_i(r)-
\int_{\mathbb{Y}}\frac{\gamma_i^2(r,u)}{1+\gamma_i(r,u)}\lambda(du)\Big]drds,
\end{split}
\end{equation*}
which, combining \eqref{eq03}, yields
\begin{equation}\label{eq05}
\mathbb{E}\bar{M}_i(t)\leq\frac{1}{X_i(0)}e^{-c_1t}+\int_0^tb_{ii}(s)e^{-c_2(t-s)}ds\leq\frac{\check{b}}{c_1}+\left(\frac{1}{X_i(0)}
-\frac{\check{b}}{c_1}\right)e^{-c_1t}.
\end{equation}
 Hence there exists a constant
$K>0$ such that
\begin{equation}\label{eq04}
\mathbb{E}\bar{M}_i(t)\leq K.
\end{equation}
Furthermore, for any $\epsilon>0$ and constant $H_2(\epsilon)>0$,
thanks to the Chebyshev inequality and \eqref{eq04}
\begin{equation*}
\mathbb{P}\{Y_i(t)\geq
H_2\}=\mathbb{P}\left\{\bar{M}_i(t)\leq1/H_2\right\}=1-\mathbb{P}\left\{\bar{M}_i(t)>1/H_2\right\}\geq1-H_2\mathbb{E}\bar{M}_i(t)\geq1-\epsilon
\end{equation*}
whenever $H_2=\epsilon/K$, as required.

\end{proof}

\begin{theorem}\label{asymptotic}
{\rm Let the conditions of Theorem \ref{permanent} hold. Then Eq.
\eqref{eq103} has the property
\begin{equation}\label{eq06}
\lim\limits_{t\rightarrow\infty}\mathbb{E}|Y_i(t,x)-Y_i(t,y)|^{\frac{1}{2}}=0
\mbox{ uniformly in } (x,y)\in \mathbb{K}\times \mathbb{K},
\end{equation}
where $\mathbb{K}$ is any compact subset of $(0,\infty)$.}
\end{theorem}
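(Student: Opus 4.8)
The plan is to exploit the explicit representation of $Y_i(t,x)$ furnished by Lemma \ref{explicit solution} and to reduce everything to the exponential decay of $\mathbb{E}[1/\Phi_i(t)]$ already available from the proof of Theorem \ref{permanent}. Writing $A(t):=\int_0^t\Phi_i(s)b_{ii}(s)\,ds$, Lemma \ref{explicit solution} gives, for initial data $x,y\in\mathbb{K}$,
\[
Y_i(t,x)=\frac{\Phi_i(t)}{\frac1x+A(t)},\qquad Y_i(t,y)=\frac{\Phi_i(t)}{\frac1y+A(t)},
\]
where $\Phi_i$ and $A$ are the same for both initial conditions. The decisive observation is that the reciprocals differ only through the initial datum, $\frac{1}{Y_i(t,x)}-\frac{1}{Y_i(t,y)}=\frac{1}{\Phi_i(t)}\big(\tfrac1x-\tfrac1y\big)$, which after clearing denominators yields the clean identity
\[
Y_i(t,x)-Y_i(t,y)=\frac{x-y}{xy}\cdot\frac{Y_i(t,x)Y_i(t,y)}{\Phi_i(t)}.
\]

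Next I would take the square root and apply the Cauchy--Schwarz inequality to separate the product $Y_i(t,x)Y_i(t,y)$ from the factor $1/\Phi_i(t)$:
\[
\mathbb{E}|Y_i(t,x)-Y_i(t,y)|^{\frac12}\le\Big(\tfrac{|x-y|}{xy}\Big)^{\frac12}\big(\mathbb{E}[Y_i(t,x)Y_i(t,y)]\big)^{\frac12}\big(\mathbb{E}[1/\Phi_i(t)]\big)^{\frac12}.
\]
The decay is carried by the last factor. Indeed, exactly the computation behind \eqref{eq05} in the proof of Theorem \ref{permanent} shows that $\mathbb{E}[1/\Phi_i(t)]=\exp\big(-\int_0^t[a_i(s)-\sigma_i^2(s)-\int_{\mathbb{Y}}\frac{\gamma_i^2(s,u)}{1+\gamma_i(s,u)}\lambda(du)]\,ds\big)$, so hypothesis \eqref{eq03} gives the bound $\mathbb{E}[1/\Phi_i(t)]\le e^{-c_1 t}$.

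It remains to bound $\mathbb{E}[Y_i(t,x)Y_i(t,y)]$ uniformly in $t$ and in $(x,y)\in\mathbb{K}\times\mathbb{K}$. By Cauchy--Schwarz this is at most $(\mathbb{E}Y_i^2(t,x))^{\frac12}(\mathbb{E}Y_i^2(t,y))^{\frac12}$, so a uniform second--moment bound for the one-dimensional process $Y_i$ suffices. This is precisely Theorem \ref{finite moment}(2) applied to \eqref{eq103} with $p=2$: the extra hypothesis \eqref{eq90} is automatic here, since under \textbf{(A)} the function $\gamma_i$ is bounded and $\lambda(\mathbb{Y})<\infty$, whence $\int_{\mathbb{Y}}|\gamma_i(t,u)|^2\lambda(du)\le(\sup_{s,u}|\gamma_i(s,u)|)^2\lambda(\mathbb{Y})<\infty$. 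Tracking the constant through that Lyapunov argument (with $V(y)=y^2$) gives $\sup_{t\ge0}\mathbb{E}Y_i^2(t,x)\le x^2+K$, which is bounded over the compact set $\mathbb{K}$. Inserting the three estimates yields $\mathbb{E}|Y_i(t,x)-Y_i(t,y)|^{\frac12}\le C_{\mathbb{K}}\,e^{-c_1 t/2}$, and since $C_{\mathbb{K}}=\sup_{x,y\in\mathbb{K}}(\tfrac{|x-y|}{xy})^{\frac12}\cdot\sup_{x\in\mathbb{K}}(\sup_t\mathbb{E}Y_i^2(t,x))^{\frac12}<\infty$, the convergence in \eqref{eq06} holds uniformly on $\mathbb{K}\times\mathbb{K}$.

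I expect the only genuinely delicate point to be the verification that the second--moment estimate of Theorem \ref{finite moment} transfers to the auxiliary scalar equation \eqref{eq103} with a constant uniform over initial data in $\mathbb{K}$ (and the attendant remark that \eqref{eq90} is free under \textbf{(A)}); the algebraic identity for the difference and the exponential factor $\mathbb{E}[1/\Phi_i(t)]\le e^{-c_1 t}$ are immediate from the explicit formula and from Theorem \ref{permanent}, respectively.
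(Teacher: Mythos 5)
Your proof is correct, and its overall skeleton is the same as the paper's: both factor the difference through the identity $Y_i(t,x)-Y_i(t,y)=Y_i(t,x)Y_i(t,y)\bigl(\tfrac{1}{Y_i(t,y)}-\tfrac{1}{Y_i(t,x)}\bigr)$, split off the product term by Cauchy--Schwarz/H\"older, and derive the exponential decay of the reciprocal-difference factor from the explicit formula \eqref{eq9} together with the computation behind \eqref{eq05}, exactly as in the paper's estimate \eqref{eq19}. (In both your write-up and the paper's, the asserted equality for $\mathbb{E}[1/\Phi_i(t)]$ should strictly be an inequality, since the Dol\'eans-Dade exponential is a priori only a positive local martingale, hence a supermartingale; the inequality $\mathbb{E}[1/\Phi_i(t)]\leq e^{-c_1t}$ is all either argument needs.) Where you genuinely diverge is in bounding $\mathbb{E}[Y_i(t,x)Y_i(t,y)]$ uniformly in $t$: the paper applies It\^o's product formula to $Y_i(t,x)Y_i(t,y)$, uses Jensen's inequality and $a+b\geq2\sqrt{ab}$ to produce a differential inequality with the dissipative term $-\int_0^t b_{ii}(s)(\mathbb{E}(Y_i(s,x)Y_i(s,y)))^{3/2}ds$, and then invokes a comparison theorem for the resulting Bernoulli-type ODE to obtain \eqref{eq07}; you instead reduce via Cauchy--Schwarz to $\sup_{t}\mathbb{E}Y_i^2(t,x)$ and rerun the Lyapunov argument of Theorem \ref{finite moment}(2) for the scalar equation \eqref{eq103}, correctly observing that hypothesis \eqref{eq90} with $p=2$ is automatic under ${\bf(A)}$ because $\gamma_i$ is bounded and $\lambda(\mathbb{Y})<\infty$, and that the resulting bound $x^2+K$ is uniform over the compact set $\mathbb{K}$. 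Your route is shorter and reuses existing machinery (no It\^o product computation and no nonlinear comparison argument), while the paper's route is self-contained within the proof and yields a product bound depending only on $xy$; both deliver the same uniform exponential rate $C_{\mathbb{K}}e^{-c_1t/2}$ and hence the uniform convergence in \eqref{eq06}.
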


\begin{proof}
By the H\"older inequality
\begin{equation*}
\begin{split}
\mathbb{E}|Y_i(t,x)-
Y_i(t,y)|^{\frac{1}{2}}&=\mathbb{E}\left(Y_i(t,x)Y_i(t,y)\left|\frac{1}{Y_i(t,y)}-
\frac{1}{Y_i(t,x)}\right|\right)^{\frac{1}{2}}\\
&\leq(\mathbb{E}(Y_i(t,x)Y_i(t,y)))^{\frac{1}{2}}\left(\mathbb{E}\left|\frac{1}{Y_i(t,
y)}-\frac{1}{Y_i(t,x)}\right|\right)^{\frac{1}{2}}.
\end{split}
\end{equation*}
To show the desired assertion it is sufficient to estimate the two
terms on the right-hand side of the last step.
By virtue of the It\^o formula,
\begin{equation*}
\begin{split}
d(Y_i(t,x)Y_i(t,y))&=Y_i(t^-,x)dY_i(t,y)+Y_i(t^-,y)dY_i(t,x)+d[Y_i(t,x),Y_i(t,y)]\\
&=Y_i(t^-,x)Y_i(t^-,y)\left[(a_i(t)-b_{ii}(t)Y_i(t,y))dt+\sigma_i(t)
dW(t)+\int_{\mathbb{Y}}\gamma_i(t,u) \tilde{N}(dt,du)\right]\\
&\quad+Y_i(t^-,x)Y_i(t^-,y)\left[(a_i(t)-b_{ii}(t)Y_i(t,x))dt+\sigma_i(t)
dW(t)+\int_{\mathbb{Y}}\gamma_i(t,u) \tilde{N}(dt,du)\right]\\
&\quad+\sigma^2_i(t)Y_i(t,x)Y_i(t,y)dt+\int_{\mathbb{Y}}\gamma^2_i(t,u)Y_i(t^-,x)Y_i(t^-,y)N(dt,du)\\
&=(2a_i(t)+\sigma^2_i(t))Y_i(t,x)Y_i(t,y)dt-b_{ii}(t)Y_i(t,x)Y_i(t,y)(Y_i(t,x)+Y_i(t,y))dt\\
&\quad +2\sigma_i(t)
Y_i(t,x)Y_i(t,y)dW(t)+2\int_{\mathbb{Y}}\gamma_i(t,u)
Y_i(t^-,x)Y_i(t^-,y)\tilde{N}(dt,du)\\
&\quad+\int_{\mathbb{Y}}\gamma^2_i(t,u)Y_i(t^-,x)Y_i(t^-,y)N(dt,du).
\end{split}
\end{equation*}
Thus, in view of Jensen's inequality and the
familiar inequality $a+b\geq2\sqrt{ab}$ for any $a,b\geq0$, we
deduce that
\begin{equation*}
\begin{split}
\mathbb{E}(Y_i(t,x)Y_i(t,y))&\leq xy+\int_0^t\delta_i(s)\mathbb{E}(Y_i(s,x)Y_i(s,y))ds\\
&\quad-\mathbb{E}\int_0^tb_{ii}(s)(Y_i(s,x)Y_i(s,y)(Y_i(s,x)+Y_i(s,y)))ds\\
&\leq xy+\int_0^t\delta_i(s)\mathbb{E}(Y_i(s,x)Y_i(s,y))ds
-\int_0^tb_{ii}(s)(\mathbb{E}(Y_i(s,x)Y_i(s,y)))^{\frac{3}{2}}ds,
\end{split}
\end{equation*}
where
$\delta_i(t):=2a_i(t)+\sigma^2_i(t)+\int_{\mathbb{Y}}\gamma^2_i(t,u)\lambda(du)$.
By the comparison theorem,
\begin{equation}\label{eq07}
\begin{split}
\mathbb{E}(Y_i(t,x)Y_i(t,y))&\leq\left(1/\sqrt{xy}e^{-
\frac{1}{2}\int_0^t\delta_i(s)ds}+\frac{1}{2}\int_0^tb_{ii}(s)e^{-
\frac{1}{2}\int_s^t\delta_i(\tau)d\tau}ds\right)^{-2}\\
&\leq\left(
\hat{b}/\check{\delta}_i+(1/\sqrt{xy}-\hat{b}_{ii}/\check{\delta}_i)e^{-\frac{\check{\delta}_it}{2}}\right)^{-2}.
\end{split}
\end{equation}
 On the other hand, thanks to \eqref{eq9}
we have
\begin{equation*}
\begin{split}
&\frac{1}{Y_i(t,x)}-\frac{1}{Y_i(t,y)}\\
&\ =\left(\frac{1}{x}-\frac{1}{y}\right)
\exp\Big(-\int_0^t\Big[a_i(s)-\frac{1}{2}\sigma^2_i(s)+
\int_{\mathbb{Y}}(\ln(1+\gamma_i(s,u))-\gamma_i(s,u))\lambda(du)\Big]ds\\
&\quad-\int_0^t\sigma_i(s)dW(s)
-\int_0^t\int_{\mathbb{Y}}\ln(1+\gamma_i(s,u))\tilde{N}(ds,du)\Big)
.
\end{split}
\end{equation*}
In the same way as \eqref{eq05} was done, it follows from
\eqref{eq03} that
\begin{equation}\label{eq19}
\mathbb{E}\left|\frac{1}{Y_i(t,x)}-\frac{1}{Y_i(t,y)}\right|\leq\left|\frac{1}{x}-
\frac{1}{y}\right| e^{-c_2 t}.
\end{equation}
Thus \eqref{eq06} follows by combining \eqref{eq07} and
\eqref{eq19}.
\end{proof}

If $a_i,b_{ii},\sigma_i,\gamma_i$ are
time-independent, Eq. \eqref{eq103} reduces to
\begin{equation}\label{eq21}
dY_i(t)=Y_i(t^-)\left[(a_i-b_{ii}Y_i(t))dt+\sigma_i
dW(t)+\int_{\mathbb{Y}}\gamma_i(u)\tilde{N}(dt,du)\right],
\end{equation}
with original value $x>0$. Let $p(t,x,dy)$ denote the transition
probability of solution process $Y_i(t,x)$ and $\mathbb{P}(t,x,A)$
denote the probability of event $\{Y_i(t,x)\in A\}$, where $A$ is a
Borel measurable subset of $(0,\infty)$.  It is similar to that of Corollary \ref{exin}, under the conditions of Theorem \ref{permanent}
there exists  an invariant measure for  $Y_i(t,x)$. Moreover by the standard procedure \cite[p213-216]{my06},
we know that Theorem \ref{asymptotic} implies the uniqueness of invariant measure. That is:
\begin{theorem}\label{distribution}
{\rm Under the conditions of  Theorem \ref{permanent} and \ref{asymptotic}, the solution
$Y_i(t,x)$ of Eq. \eqref{eq21} has a unique invariant measure.}
\end{theorem}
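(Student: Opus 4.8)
The plan is to split the claim into existence and uniqueness and to handle uniqueness, which is the real content, by the stability-in-distribution method of \cite[pp.\ 213--216]{my06}. Existence is obtained exactly as in Corollary \ref{exin}: writing $\mu_T(\cdot)=\frac1T\int_0^T\mathbb{P}(t,x,\cdot)\,dt$, the uniform moment bound of Theorem \ref{finite moment} (with $p\in(0,1)$) controls the mass of $\mu_T$ near $+\infty$ through Chebyshev, while the bound \eqref{eq04} on $\mathbb{E}[1/Y_i(t,x)]$ controls its mass near $0$; hence $\{\mu_T\}$ is tight on $(0,\infty)$ and Krylov--Bogoliubov \cite[Corollary 3.1.2]{pz97} produces an invariant measure. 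Throughout I work with the dual bounded-Lipschitz metric
\[
d(\mu,\nu):=\sup\Big\{\Big|\textstyle\int f\,d\mu-\int f\,d\nu\Big|:\ \|f\|_\infty\le1,\ |f(u)-f(v)|\le|u-v|\Big\}
\]
on probability measures over $(0,\infty)$, which metrizes weak convergence.

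The first step is to transfer Theorem \ref{asymptotic} from the $L^{1/2}$-distance of trajectories to the metric $d$ on transition laws. Using the elementary inequality $\min(2,r)\le\sqrt2\,\sqrt r$ for $r\ge0$, any admissible test function $f$ obeys
\[
|f(Y_i(t,x))-f(Y_i(t,y))|\le\min\big(2,|Y_i(t,x)-Y_i(t,y)|\big)\le\sqrt2\,|Y_i(t,x)-Y_i(t,y)|^{1/2},
\]
so taking expectations and invoking \eqref{eq06} gives
\[
d\big(\mathbb{P}(t,x,\cdot),\mathbb{P}(t,y,\cdot)\big)\le\sqrt2\,\mathbb{E}|Y_i(t,x)-Y_i(t,y)|^{1/2}\longrightarrow0\quad(t\to\infty),
\]
uniformly for $(x,y)$ in any compact $\mathbb{K}\subset(0,\infty)$: the transition law forgets its starting point, uniformly on compacts.

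The second step combines this with the semigroup property. For $s,t\ge0$, the Chapman--Kolmogorov relation yields
\[
d\big(\mathbb{P}(t+s,x,\cdot),\mathbb{P}(t,x,\cdot)\big)\le\int_{(0,\infty)}d\big(\mathbb{P}(t,y,\cdot),\mathbb{P}(t,x,\cdot)\big)\,\mathbb{P}(s,x,dy).
\]
Splitting the $y$-integral over a large compact $\mathbb{K}$ (where the integrand is small by Step~1) and its complement (where the integrand is bounded by $2$ while $\mathbb{P}(s,x,\mathbb{K}^c)$ is small uniformly in $s$ by the uniform-in-time moment bounds of Theorem \ref{finite moment} and \eqref{eq04}) shows the right-hand side tends to $0$ as $t\to\infty$, uniformly in $s$. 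Hence $\mathbb{P}(t,x,\cdot)$ is $d$-Cauchy and converges to some $\pi$; by Step~1 the limit is independent of $x$, and passing to the limit in $\mathbb{P}(t+s,x,\cdot)=\int\mathbb{P}(s,y,\cdot)\,\mathbb{P}(t,x,dy)$ (legitimate once the Feller property is in hand) shows $\pi$ is invariant. Finally, if $\rho$ is any invariant measure, then $\rho=\int\mathbb{P}(t,x,\cdot)\,\rho(dx)$ for every $t$; since $d(\mathbb{P}(t,x,\cdot),\pi)\to0$ for each $x$ and the test functions are bounded, dominated convergence gives $\int f\,d\rho=\int f\,d\pi$ for all bounded Lipschitz $f$, that is $\rho=\pi$, which is the asserted uniqueness.

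The main obstacle is the non-compactness of the state space $(0,\infty)$: the convergence in \eqref{eq06}, and hence in Step~1, is only uniform on compact sets, so every limiting argument must be paired with a tail estimate near both $0$ and $+\infty$. Those tails are exactly what the two-sided control of Theorem \ref{finite moment} and \eqref{eq04} supplies, and the whole proof rests on interlocking that local-uniform stability with this tightness. A secondary technical point is the Feller property of the semigroup, needed to justify invariance of the limiting measure; it follows from the continuous dependence of $Y_i(t,x)$ on $x$ that is manifest in the explicit formula \eqref{eq9}.
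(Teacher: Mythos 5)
Your proposal is correct and follows essentially the same route as the paper: existence via the Corollary \ref{exin}-style tightness and Krylov--Bogoliubov argument, and uniqueness via the stability-in-distribution procedure of \cite[pp.~213--216]{my06} applied to Theorem \ref{asymptotic}, which is exactly the ``standard procedure'' the paper invokes without writing out. Your write-up simply supplies the details of that cited procedure (bounded-Lipschitz metric, Chapman--Kolmogorov splitting, tightness near $0$ and $\infty$, Feller property), all of which are consistent with the paper's intent.
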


We further need the following exponential martingale inequality with
jumps, e.g., \cite[Theorem 5.2.9, p291]{a09}.

\begin{lemma}\label{exponential martingale}
{\rm Assume that $g:[0,\infty)\rightarrow \mathbb{R}$ and
$h:[0,\infty)\times \mathbb{Y}\rightarrow \mathbb{R}$ are both
predictable $\mathcal {F}_t$-adapted processes such that for any
$T>0$
\begin{equation*}
\int_0^T|g(t)|^2dt< \infty\mbox{ a.s. and }
\int_0^T\int_{\mathbb{Y}}|h(t,u)|^2\lambda(du)dt<\infty  \mbox{
a.s.}
\end{equation*}
Then for any constants $\alpha,\beta>0$
\begin{equation*}
\begin{split}
\mathbb{P}\Big\{\sup\limits_{0\leq t\leq
T}\Big[&\int_0^tg(s)dW(s)-\frac{\alpha}{2}\int_0^t|g(s)|^2ds+\int_0^t\int_{\mathbb{Y}}h(s,u)\tilde{N}(ds,du)\\
&-\frac{1}{\alpha}\int_0^t\int_{\mathbb{Y}}[e^{\alpha
h(s,u)}-1-\alpha h(s,u)]\lambda(du)ds\Big]>\beta\Big\}\leq
e^{-\alpha\beta}.
\end{split}
\end{equation*}
}
\end{lemma}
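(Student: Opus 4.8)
The plan is to exponentiate the functional inside the probability and to recognize the result as a Dol\'eans-Dade (stochastic) exponential, which is a nonnegative local martingale; the inequality then reduces to Doob's maximal inequality for nonnegative supermartingales. Introduce
\begin{align*}
Q(t) := {} & \int_0^t g(s)\,dW(s) - \frac{\alpha}{2}\int_0^t|g(s)|^2\,ds \\
& + \int_0^t\int_{\mathbb{Y}}h(s,u)\tilde{N}(ds,du) - \frac{1}{\alpha}\int_0^t\int_{\mathbb{Y}}[e^{\alpha h(s,u)}-1-\alpha h(s,u)]\lambda(du)\,ds,
\end{align*}
so that the event in question is exactly $\{\sup_{0\le t\le T}Q(t)>\beta\}$. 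Since $x\mapsto e^{\alpha x}$ is strictly increasing, this event coincides with $\{\sup_{0\le t\le T}Z(t)>e^{\alpha\beta}\}$, where $Z(t):=e^{\alpha Q(t)}$.

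First I would show that $Z$ is a strictly positive local martingale with $Z(0)=1$. Setting
\[
L(t) := \alpha\int_0^t g(s)\,dW(s) + \int_0^t\int_{\mathbb{Y}}(e^{\alpha h(s,u)}-1)\tilde{N}(ds,du),
\]
a direct application of It\^o's formula for L\'evy-type stochastic integrals (the same tool used in the proof of Lemma \ref{variation-of-constant formula}) to $Z=e^{\alpha Q}$ shows that the continuous It\^o correction $\frac{\alpha^2}{2}|g|^2$ is absorbed by the drift term $-\frac{\alpha}{2}|g|^2$, while the jump compensator is absorbed by $-\frac{1}{\alpha}\int_{\mathbb{Y}}[e^{\alpha h}-1-\alpha h]\lambda(du)$; precisely these two corrections in $Q$ are chosen so that everything cancels, leaving $dZ(t)=Z(t^-)\,dL(t)$, $Z(0)=1$. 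Equivalently, $Z=\mathcal{E}(L)$ is the stochastic exponential of $L$. The integrability hypotheses on $g$ and $h$ guarantee that $L$ is a well-defined local martingale, and since every jump satisfies $1+\Delta L(s)=e^{\alpha h(s,u)}>0$, the exponential $Z$ remains strictly positive.

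Finally, a nonnegative local martingale is a supermartingale, so $\mathbb{E}Z(t)\le Z(0)=1$ for all $t$, and Doob's maximal inequality for nonnegative supermartingales on $[0,T]$ gives
\[
\mathbb{P}\Big\{\sup_{0\le t\le T}Z(t)\ge\eta\Big\} \le \frac{\mathbb{E}Z(0)}{\eta} = \frac{1}{\eta}
\]
for every $\eta>0$; taking $\eta=e^{\alpha\beta}$ and recalling that $\{\sup_t Q(t)>\beta\}\subseteq\{\sup_t Z(t)\ge e^{\alpha\beta}\}$ yields the assertion. The one delicate point, and the main (though standard) obstacle, is the passage from a local to a genuine supermartingale: I would localize by a sequence of stopping times $\tau_n\uparrow\infty$ reducing $L$, apply Doob's inequality to each stopped martingale $Z(\cdot\wedge\tau_n)$, and then let $n\to\infty$ invoking Fatou's lemma. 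Everything else is the It\^o computation verifying $dZ=Z^-\,dL$.
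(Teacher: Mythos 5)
Your proof is correct and coincides with the standard argument: the paper itself offers no proof of this lemma, quoting it verbatim from \cite[Theorem 5.2.9, p291]{a09}, and the proof given there is exactly your route --- recognize $e^{\alpha Q}$ as the Dol\'eans--Dade exponential $\mathcal{E}(L)$ via It\^o's formula for L\'evy-type integrals, hence a nonnegative local martingale with $Z(0)=1$, then localize, apply Doob's maximal inequality to the stopped processes, and pass to the limit by Fatou. The only phrase to tighten is ``the integrability hypotheses on $g$ and $h$ guarantee that $L$ is a well-defined local martingale'': the assumption $\int_0^T\int_{\mathbb{Y}}|h(t,u)|^2\lambda(du)dt<\infty$ does not by itself make $e^{\alpha h}-1$ square-integrable against $\lambda(du)dt$, so the localizing times should also control the jump integrand (e.g.\ stop when $\int_0^t\int_{\mathbb{Y}}(e^{\alpha h(s,u)}-1)^2\lambda(du)ds$ gets large), a harmless refinement here since in the paper's application $h(s,u)=e^{s}\ln(1+\gamma_i(s,u))$ is used with $\alpha e^{s}\leq\epsilon$, keeping $\alpha h$ effectively bounded.
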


\begin{lemma}\label{asymptotic property}
{\rm Let assumption ${\bf(A)}$ hold. Assume further that for any
$t\geq0$ and $i=1,\cdots,n$
\begin{equation}\label{eq32}
\sup\limits_{t\geq0}\int_0^t\int_{\mathbb{Y}}e^{s-t}[\gamma_i(s,u)-\ln(1+\gamma_i(s,u))]\lambda(du)ds<\infty.
\end{equation}
Then
\begin{equation*}
\limsup\limits_{t\rightarrow\infty}\frac{\ln Y_i(t)}{\ln t}\leq1,
\mbox{ a.s. for each }i=1,\cdots,n,
\end{equation*}

}
\end{lemma}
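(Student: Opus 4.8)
The plan is to combine the explicit representation with the self-regulating structure of the logistic drift, reducing the almost-sure growth estimate to control of an exponentially weighted martingale via the exponential martingale inequality (Lemma \ref{exponential martingale}) and Borel--Cantelli; hypothesis \eqref{eq32} enters at one decisive point. First I would apply It\^o's formula to $e^t\ln Y_i(t)$ (equivalently, differentiate the logarithm of \eqref{eq9}), obtaining
$$e^t\ln Y_i(t)=\ln Y_i(0)+\int_0^te^s\Big[\ln Y_i(s)+a_i(s)-b_{ii}(s)Y_i(s)-\tfrac12\sigma_i^2(s)+\int_{\mathbb Y}(\ln(1+\gamma_i)-\gamma_i)\lambda(du)\Big]ds+M(t),$$
where $M(t):=\int_0^te^s\sigma_i(s)\,dW(s)+\int_0^t\int_{\mathbb Y}e^s\ln(1+\gamma_i(s,u))\,\tilde N(ds,du)$. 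The weight $e^t$ is chosen so that self-regulation bounds the bracketed drift from above by a constant: since $b_{ii}\ge\hat b_{ii}$, $\ln y-b_{ii}(s)y\le\sup_{y>0}(\ln y-\hat b_{ii}y)=-1-\ln\hat b_{ii}$, while $-\tfrac12\sigma_i^2\le0$ and $\ln(1+\gamma_i)-\gamma_i\le0$. Thus the drift integral is at most $\bar C_1 e^t$, whence $\ln Y_i(t)\le e^{-t}\ln Y_i(0)+\bar C_1+e^{-t}M(t)$, and it remains only to show that $e^{-t}M(t)$ grows no faster than $\ln t$.

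For $M$ I would fix a small mesh width $\eta>0$ and apply Lemma \ref{exponential martingale} on each interval $[0,t_j]$, $t_j:=j\eta$, with $g(s)=e^s\sigma_i(s)$, $h(s,u)=e^s\ln(1+\gamma_i(s,u))$, and parameters $\alpha_j:=e^{-t_j}$, $\beta_j:=\theta e^{t_j}\ln t_j$ for fixed $\theta>1$. Then $\alpha_j\beta_j=\theta\ln t_j$, so the exceptional probabilities $e^{-\alpha_j\beta_j}=t_j^{-\theta}$ are summable in $j$, and Borel--Cantelli yields an a.s.\ finite $j_0(\omega)$ past which, for all $t\le t_j$, $M(t)$ is at most $\beta_j$ plus the two compensators. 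The Brownian compensator is harmless, $\tfrac{\alpha_j}{2}\int_0^t e^{2s}\sigma_i^2\,ds\le\tfrac14\check\sigma_i^2 e^{t}$ (using $2s-t_j\le s$). The essential step, and the sole use of \eqref{eq32}, is the jump compensator $\tfrac1{\alpha_j}\int_0^t\int_{\mathbb Y}(e^{\alpha_j h}-1-\alpha_j h)\lambda(du)\,ds$: since the weight $e^s$ sits inside the exponential, this term would blow up without a cancellation. I would supply it by the convexity inequality $e^{rv}-1-rv\le r(e^v-1-v)$ for $r\in[0,1]$ (the left side is convex in $r$ and vanishes at $r=0$, so lies under its chord). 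With $r=\alpha_j e^s=e^{s-t_j}\in(0,1]$, $v=\ln(1+\gamma_i)$ and $e^v-1-v=\gamma_i-\ln(1+\gamma_i)$, the compensator is dominated by $\int_0^t e^s\int_{\mathbb Y}(\gamma_i-\ln(1+\gamma_i))\lambda(du)\,ds=e^{t}\int_0^t e^{s-t}\int_{\mathbb Y}(\gamma_i-\ln(1+\gamma_i))\lambda(du)\,ds\le C_{32}e^{t}$, the finiteness of $C_{32}$ being exactly \eqref{eq32}.

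Collecting the three bounds gives, for $t\in[t_{j-1},t_j]$ and $j\ge j_0$, $\ln Y_i(t)\le\text{const}+\theta e^{t_j-t}\ln t_j\le\text{const}+\theta e^{\eta}\ln t_j$; since $t_j\le t+\eta$, dividing by $\ln t$ yields $\limsup_{t\to\infty}\ln Y_i(t)/\ln t\le\theta e^{\eta}$ almost surely, and intersecting over a sequence $\theta\downarrow1$, $\eta\downarrow0$ produces the asserted bound $\le1$. I expect the main obstacle to be the jump-compensator estimate: it is only through the convexity bound together with \eqref{eq32} that the $e^s$-weighted Poisson integral can be tamed, and forcing the growth constant down to the sharp value $1$ requires sending $\eta\to0$, so the whole argument hinges on that compensator bound being uniform in $j$ and $\eta$. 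As a remark, assumption $\mathbf{(A)}$ alone already makes every moment $\sup_t\mathbb E Y_i(t)^p$ finite (the $-b_{ii}Y_i^{p+1}$ term dominates the bounded jump intensity), so the same conclusion could instead be reached without \eqref{eq32} via a maximal inequality on $[t_{j-1},t_j]$ followed by Chebyshev and Borel--Cantelli.
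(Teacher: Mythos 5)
Your proof is correct and is essentially the paper's own argument: It\^o's formula applied to $e^t\ln Y_i(t)$ with the drift tamed by $\sup_{y>0}(\ln y-\hat b_{ii}y)=-1-\ln \hat b_{ii}$, the exponential martingale inequality of Lemma \ref{exponential martingale} on $[0,t_j]$ with $\alpha_j\sim e^{-t_j}$, $\beta_j\sim\theta e^{t_j}\ln t_j$ followed by Borel--Cantelli, and the jump compensator controlled precisely through \eqref{eq32}, where your convexity inequality $e^{rv}-1-rv\le r(e^v-1-v)$ is exactly the paper's inequality \eqref{eq100} applied with $x=e^v=1+\gamma_i$. The remaining differences are cosmetic: you fix at $1$ the auxiliary parameter $\epsilon\in(0,1)$ that the paper carries along and sends to $1$ at the end, and you use $\ln t_j$ where the paper uses $\ln k$.
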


\begin{proof}
For any $t\geq0$ and $i=1,\cdots,n$, applying the It\^o formula
\begin{equation*}
\begin{split}
e^t\ln Y_i(t)&=\ln X_i(0)+\int_0^te^s\Big[\ln Y_i(s)+a_i(s)-b_{ii}(s)Y_i(s)-\frac{1}{2}\sigma_i^2(s)\\
&\quad+\int_{\mathbb{Y}}[\ln(1+\gamma_i(s,u))-\gamma_i(s,u)]\lambda(du)\Big]ds\\
&\quad+\int_0^te^s\sigma_i(s)dW(s)+\int_0^t\int_{\mathbb{Y}}e^s\ln(1+\gamma_i(s,u))\tilde{N}(ds,du).
\end{split}
\end{equation*}
Note that, for $c,x>0$, $\ln x-cx$ attains its maximum value $-1-\ln
c$ at $x=\frac{1}{c}$. Thus it follows from the inequality
\eqref{eq102} that
\begin{equation}\label{eq109}
\begin{split}
e^t\ln Y_i(t)&\leq\ln X_i(0)+\int_0^te^s\Big[-1-\ln
b_{ii}(s)+a_i(s)-\frac{1}{2}\sigma_i^2(s)
\Big]ds\\
&\quad+\int_0^te^s\sigma_i(s)dW(s)+\int_0^t\int_{\mathbb{Y}}e^s\ln(1+\gamma_i(s,u))\tilde{N}(ds,du).
\end{split}
\end{equation}
 In the light of Lemma \ref{exponential
martingale}, for any $\alpha,\beta,T>0$,
\begin{equation*}
\begin{split}
\mathbb{P}\Big\{&\sup\limits_{0\leq t\leq
T}\Big[\int_0^te^s\sigma_i(s)dW(s)-\frac{\alpha}{2}\int_0^te^{2s}
\sigma_i^2(s)ds+\int_0^t\int_{\mathbb{Y}}e^s\ln(1+\gamma_i(s,u))\tilde{N}(ds,du)\\
&-\frac{1}{\alpha}\int_0^t\int_{\mathbb{Y}}\Big[e^{\alpha
e^s\ln(1+\gamma_i(s,u))} -1-\alpha
e^s\ln(1+\gamma_i(s,u))\Big]\lambda(du)ds\Big]\geq\beta\Big\}\leq
e^{-\alpha\beta}.
\end{split}
\end{equation*}
Choose $T=k\gamma, \alpha=\epsilon e^{-k\gamma}$, and
$\beta=\frac{\theta e^{k\gamma}\ln k}{\epsilon}$, where
$k\in\mathbb{N}, 0<\epsilon<1, \gamma>0$, and $\theta>1$ in the
above equation. Since $\sum\limits_{k=1}^\infty k^{-\theta}<\infty$,
we can deduce from the Borel-Cantalli Lemma that there exists an
$\Omega_i\subseteq\Omega$ with $\mathbb{P}(\Omega_i)=1$ such that
for any $\epsilon\in\Omega_i$ an integer $k_i=k_i(\omega,\epsilon)$
can be found such that
\begin{equation*}
\begin{split}
&\int_0^te^s\sigma_i(s)dW(s)+\int_0^t\int_{\mathbb{Y}}e^s\ln(1+\gamma_i(s,u))\tilde{N}(ds,du)\\
&\quad\leq\frac{\theta e^{k\gamma}\ln k}{\epsilon}+\frac{\epsilon e^{-k\gamma}}{2}\int_0^te^{2s}\sigma_i^2(s)ds\\
&\quad\quad+\frac{1}{\epsilon
e^{-k\gamma}}\int_0^t\int_{\mathbb{Y}}\Big[(1+\gamma_i(s,u))^{\epsilon
e^{s-k\gamma}} -1-\epsilon
e^{s-k\gamma}\ln(1+\gamma_i(s,u))\Big]\lambda(du)ds
\end{split}
\end{equation*}
whenever $k\geq k_i, 0\leq t\leq k\gamma$. Next, note from the
inequality \eqref{eq100} that, for any $\omega\in\Omega_i$ and
$0<\epsilon<1,0\leq t\leq k\gamma$ with $k \geq k_i$,
\begin{equation*}
\begin{split}
&\frac{1}{\epsilon
e^{t-k\gamma}}\int_0^t\int_{\mathbb{Y}}\Big[(1+\gamma_i(s,u))^{\epsilon
e^{s-k\gamma}} -1-\epsilon e^{s-k\gamma}\ln(1+\gamma_i(s,u))\Big]\lambda(du)ds\\
&\leq\quad
\int_0^t\int_{\mathbb{Y}}e^{s-t}(\gamma_i(s,u)-\ln(1+\gamma_i(s,u)))\lambda(du)ds.
\end{split}
\end{equation*}
Thus, for $\omega\in\Omega_i$ and $(k-1)\gamma\leq t\leq k\gamma$
with $k\geq k_i+1$, we have
\begin{equation*}
\begin{split}
\frac{\ln Y_i(t)}{\ln t}&\leq \frac{\ln X_i(0)}{e^t\ln
t}+\frac{\theta e^{k\gamma}\ln k}{\epsilon
e^{(k-1)\gamma}\ln((k-1)\gamma)}\\
&\quad+\frac{1}{\ln t}\int_0^te^{s-t}\Big[-1-\ln
b_{ii}(s)+a_i(s)-\frac{1}{2}(1-\epsilon e^{s-k\gamma})\sigma_i^2(s)
\Big]ds\\
&\quad+\frac{1}{\ln
t}\int_0^t\int_{\mathbb{Y}}e^{s-t}[\gamma_i(s,u)-\ln(1+\gamma_i(s,u))]\lambda(du)ds.
\end{split}
\end{equation*}
Letting $k\uparrow\infty$,together with assumption ${\bf(A)}$ and
\eqref{eq32}, leads to
\begin{equation*}
\limsup\limits_{t\rightarrow\infty}\frac{\ln Y_i(t)}{\ln
t}\leq\frac{\theta e^{\gamma}}{\epsilon},
\end{equation*}
and the conclusion follows by setting
$\gamma\downarrow0,\epsilon\uparrow1$, and $\theta\downarrow1$.
\end{proof}

Noting the limit $\lim\limits_{t\rightarrow\infty}\dfrac{\ln
t}{t}=0$, we have the following corollary.
\begin{corollary}\label{corollary}
{\rm Under the conditions of Lemma \ref{asymptotic property}
\begin{equation*}
\limsup\limits_{t\rightarrow\infty}\dfrac{\ln Y_i(t)}{t}\leq0,
\mbox{ a.s. for each }i=1,\cdots,n,
\end{equation*}
and therefore
\begin{equation*}
\limsup\limits_{t\rightarrow\infty}\dfrac{\ln\Big(\prod\limits_{i=1}^nY_i(t)\Big)}{t}\leq0,
\mbox{ a.s.}
\end{equation*}
}

\end{corollary}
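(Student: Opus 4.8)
The plan is to deduce both assertions directly from Lemma \ref{asymptotic property} by inserting the elementary limit $\lim_{t\to\infty}\frac{\ln t}{t}=0$, with no further stochastic analysis required. First I would fix $i$ and work on the almost sure event $\Omega_i$ on which $\limsup_{t\to\infty}\frac{\ln Y_i(t)}{\ln t}\leq 1$; this event has full probability by the lemma.

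The key algebraic step is the factorization, valid for $t>1$ (so that $\ln t>0$),
$$\frac{\ln Y_i(t)}{t}=\frac{\ln Y_i(t)}{\ln t}\cdot\frac{\ln t}{t}.$$
Since the limsup of the first factor is at most $1$, it is in particular finite, so there is a (random) time $T(\omega)$ with $\frac{\ln Y_i(t)}{\ln t}\leq 2$ for all $t\geq T(\omega)$. Because the second factor $\frac{\ln t}{t}$ is strictly positive for $t>1$, multiplying preserves the inequality and gives $\frac{\ln Y_i(t)}{t}\leq 2\cdot\frac{\ln t}{t}$ for $t\geq\max\{T(\omega),e\}$. Letting $t\to\infty$ and using $\frac{\ln t}{t}\to 0$ then yields $\limsup_{t\to\infty}\frac{\ln Y_i(t)}{t}\leq 0$ on $\Omega_i$, which is the first claim.

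For the second claim I would pass to the intersection $\Omega^*:=\bigcap_{i=1}^n\Omega_i$, which still has probability one since it is a finite intersection of full-measure events. Writing $\ln\big(\prod_{i=1}^n Y_i(t)\big)=\sum_{i=1}^n\ln Y_i(t)$ and using the subadditivity of $\limsup$, on $\Omega^*$ I obtain
$$\limsup_{t\to\infty}\frac{\ln\big(\prod_{i=1}^n Y_i(t)\big)}{t}\leq\sum_{i=1}^n\limsup_{t\to\infty}\frac{\ln Y_i(t)}{t}\leq 0.$$

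I do not anticipate a genuine obstacle here; the only point requiring care is the sign bookkeeping in the factorization, since $\ln Y_i(t)$ may well be negative. This is harmless because we only need an upper bound and the auxiliary factor $\frac{\ln t}{t}$ is positive, so multiplying through preserves the direction of the inequality. The remaining work is purely the observation that a finite intersection of almost sure events is again almost sure, together with the subadditivity of $\limsup$.
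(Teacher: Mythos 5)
Your proposal is correct and takes essentially the same route as the paper: the paper's entire argument is the one-line observation that $\lim_{t\rightarrow\infty}(\ln t)/t=0$ combined with Lemma \ref{asymptotic property}, and your write-up simply makes explicit the factorization $\frac{\ln Y_i(t)}{t}=\frac{\ln Y_i(t)}{\ln t}\cdot\frac{\ln t}{t}$, the sign bookkeeping, and the intersection of the finitely many full-measure events. Nothing further is needed.
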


\begin{corollary}
{\rm Under the conditions of Lemma \ref{asymptotic property}
\begin{equation*}
\limsup\limits_{t\rightarrow\infty}\dfrac{\ln(X_i(t))}{t}\leq0,
\mbox{ a.s. for each }i=1,\cdots,n,
\end{equation*}
and therefore
\begin{equation*}
\limsup\limits_{t\rightarrow\infty}\dfrac{\ln\Big(\prod\limits_{i=1}^nX_i(t)\Big)}{t}\leq0,
\mbox{ a.s.}
\end{equation*}
}
\end{corollary}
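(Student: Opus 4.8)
The plan is to transfer the upper bound already established for the auxiliary process $Y_i$ in Corollary \ref{corollary} to the genuine component $X_i$ via the pathwise comparison inequality. Recall that in the proof of Theorem \ref{positive solution} the comparison theorem yielded $Z_i(t)\leq X_i(t)\leq Y_i(t)$ on $[0,\tau_e)$, and since $\tau_e=\infty$ was established there, this sandwich in fact holds for all $t\geq0$ almost surely. In particular $0<X_i(t)\leq Y_i(t)$ for every $t\geq0$ and each $i=1,\cdots,n$.

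First I would take logarithms in $X_i(t)\leq Y_i(t)$, which is legitimate because both processes are strictly positive, to obtain $\ln X_i(t)\leq\ln Y_i(t)$. Dividing by $t>0$ and passing to the limit superior then gives
\begin{equation*}
\limsup_{t\to\infty}\frac{\ln X_i(t)}{t}\leq\limsup_{t\to\infty}\frac{\ln Y_i(t)}{t}\leq0\quad\mbox{a.s.},
\end{equation*}
where the last inequality is precisely the content of Corollary \ref{corollary}. This settles the first assertion for each $i=1,\cdots,n$.

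For the product I would write $\ln\big(\prod_{i=1}^nX_i(t)\big)=\sum_{i=1}^n\ln X_i(t)$ and invoke the elementary subadditivity of the limit superior over a finite sum, namely $\limsup_t\sum_i f_i(t)\leq\sum_i\limsup_t f_i(t)$, to conclude
\begin{equation*}
\limsup_{t\to\infty}\frac{1}{t}\ln\Big(\prod_{i=1}^nX_i(t)\Big)\leq\sum_{i=1}^n\limsup_{t\to\infty}\frac{\ln X_i(t)}{t}\leq0\quad\mbox{a.s.}
\end{equation*}

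I expect no genuine obstacle in this argument, since both ingredients (the comparison inequality and the logarithmic growth bound for $Y_i$) are already available. The only point requiring a moment's care is to ensure the comparison inequality \eqref{eq111} is applied on the full half-line $[0,\infty)$ rather than merely on the stochastic interval $[0,\tau_e)$; this extension is justified by the global existence established in Theorem \ref{positive solution}, so that $X_i(t)\leq Y_i(t)$ may be used for all $t\geq0$ without further qualification.
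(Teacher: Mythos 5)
Your proposal is correct and is essentially the paper's own argument: the paper proves this corollary in one line by recalling the comparison inequality $Z_i(t)\leq X_i(t)\leq Y_i(t)$, $t\geq0$, and invoking Corollary \ref{corollary}, which is exactly what you do. Your write-up merely fills in the routine details (positivity justifying the logarithm, monotonicity and finite subadditivity of $\limsup$, and the extension of the comparison from $[0,\tau_e)$ to $[0,\infty)$ via $\tau_e=\infty$) that the paper leaves implicit.
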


\begin{proof}
Recalling
\begin{equation*}
Z_i(t)\leq X_i(t)\leq Y_i(t),t\geq0,i=1,\cdots,n
\end{equation*}
and combining Corollary \ref{corollary}, we complete the proof.

\end{proof}

\begin{theorem}\label{YT}
{\rm Let the conditions of Lemma \ref{asymptotic property} hold.
Assume further that for any $t\geq0$ and $i=1,\cdots,n$
\begin{equation}\label{eq105}
R_i(t):=a_i(t)-\frac{1}{2}\sigma^2(t)+
\int_{\mathbb{Y}}(\ln(1+\gamma_i(t,u))-\gamma_i(t,u))\lambda(du)\geq0,
\end{equation}
and there exists constant $c_2>0$ such that
\begin{equation}\label{eq113}
\int_{\mathbb{Y}}(\ln(1+\gamma_i(t,u)))^2\lambda(du)\leq c_2.
\end{equation}
 Then for each  $i=1,\cdots,n$
\begin{equation}
\lim\limits_{t\rightarrow\infty}\frac{\ln Y_i(t)}{t}=0 \mbox{ a.s. }
\end{equation}
}
\end{theorem}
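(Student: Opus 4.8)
The plan is to pair the upper bound already available with a matching lower bound. Since the hypotheses subsume those of Lemma \ref{asymptotic property}, Corollary \ref{corollary} gives $\limsup_{t\to\infty}\frac{\ln Y_i(t)}{t}\leq 0$ a.s., so it suffices to prove $\liminf_{t\to\infty}\frac{\ln Y_i(t)}{t}\geq 0$ a.s., equivalently $\limsup_{t\to\infty}\frac{1}{t}\ln\frac{1}{Y_i(t)}\leq 0$ a.s. Starting from the explicit formula in Lemma \ref{explicit solution}, I would write
\[
\frac{1}{Y_i(t)}=\frac{1}{X_i(0)\Phi_i(t)}+\frac{1}{\Phi_i(t)}\int_0^t\Phi_i(s)b_{ii}(s)\,ds,
\]
and decompose $\ln\Phi_i(t)=A_i(t)+M_i(t)$, where $A_i(t):=\int_0^tR_i(s)\,ds$ with $R_i$ as in \eqref{eq105}, and $M_i(t):=\int_0^t\sigma_i(s)\,dW(s)+\int_0^t\int_{\mathbb{Y}}\ln(1+\gamma_i(s,u))\tilde N(ds,du)$ is a local martingale.

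First I would control $M_i$ through the strong law of large numbers, Lemma \ref{large numbers}. Because $W$ and $N$ are independent the angle bracket splits, and by boundedness of $\sigma_i$ from assumption ${\bf(A)}$ together with \eqref{eq113},
\[
\langle M_i\rangle(t)=\int_0^t\sigma_i^2(s)\,ds+\int_0^t\int_{\mathbb{Y}}(\ln(1+\gamma_i(s,u)))^2\lambda(du)\,ds\leq Kt,
\]
so $\rho_{M_i}(\infty)\leq K\int_0^\infty(1+s)^{-2}\,ds<\infty$ a.s., whence $M_i(t)/t\to0$ a.s. Since $R_i\geq 0$ by \eqref{eq105}, the process $A_i$ is nondecreasing with $A_i(t)\geq 0$, so $\frac{1}{t}\ln\Phi_i(t)=\frac{A_i(t)}{t}+\frac{M_i(t)}{t}$ has $\liminf\geq0$; this settles the first term, giving $\limsup_{t\to\infty}\frac{1}{t}\ln\frac{1}{X_i(0)\Phi_i(t)}\leq 0$.

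The crux is the second term. Using $A_i(s)-A_i(t)\leq 0$ for $s\leq t$ (monotonicity of $A_i$) and boundedness of $b_{ii}$,
\[
\frac{1}{\Phi_i(t)}\int_0^t\Phi_i(s)b_{ii}(s)\,ds\leq\check{b}_{ii}\int_0^t e^{(A_i(s)-A_i(t))+(M_i(s)-M_i(t))}\,ds\leq\check{b}_{ii}\int_0^t e^{M_i(s)-M_i(t)}\,ds.
\]
Here lies the main obstacle: I have only $M_i(s)/s\to0$, not a uniform bound on $M_i(s)-M_i(t)$, so I would fix $\epsilon>0$, choose a random $T_\epsilon$ with $|M_i(s)|\leq\epsilon s$ for $s\geq T_\epsilon$, and split the integral over $[0,T_\epsilon]$ and $[T_\epsilon,t]$. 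On the latter $M_i(s)-M_i(t)\leq 2\epsilon t$, contributing at most $t\,e^{2\epsilon t}$; on the former the contribution is at most $T_\epsilon e^{C(\omega)+\epsilon t}$, where $C(\omega):=\sup_{0\leq s\leq T_\epsilon}|M_i(s)|<\infty$. Hence $\frac{1}{t}\ln$ of the whole expression has $\limsup\leq 2\epsilon$, and letting $\epsilon\downarrow0$ yields $\limsup_{t\to\infty}\frac{1}{t}\ln\big(\frac{1}{\Phi_i(t)}\int_0^t\Phi_i(s)b_{ii}(s)\,ds\big)\leq 0$.

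Finally, applying $\ln(a+b)\leq\ln 2+\max\{\ln a,\ln b\}$ to the two positive summands of $1/Y_i(t)$ and using $\limsup_{t\to\infty}\frac{1}{t}\max\{\ln a,\ln b\}\leq\max\{\limsup\frac{\ln a}{t},\limsup\frac{\ln b}{t}\}\leq 0$ delivers $\limsup_{t\to\infty}\frac{1}{t}\ln\frac{1}{Y_i(t)}\leq 0$. Combined with the upper bound from Corollary \ref{corollary}, this gives $\lim_{t\to\infty}\frac{\ln Y_i(t)}{t}=0$ a.s. for each $i$, as claimed.
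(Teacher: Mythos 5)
Your proposal is correct and takes essentially the same route as the paper: reduce to the liminf bound via Corollary \ref{corollary}, control the Brownian and jump martingales by the strong law of large numbers (Lemma \ref{large numbers}) using assumption ${\bf(A)}$ and \eqref{eq113}, use $R_i\geq0$ from \eqref{eq105} to discard the drift in the explicit formula of Lemma \ref{explicit solution}, and finish with an $\epsilon$-argument letting $\epsilon\downarrow0$. The only difference is bookkeeping: the paper restarts the variation-of-constants formula at the random time $T$ (expressing $1/Y_i(t)$ through $1/Y_i(T)$), whereas you keep the formula from time $0$, split the integral at $T_\epsilon$, and absorb the initial segment into the a.s. finite constant $C(\omega)$ — the same idea in a slightly different dress.
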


\begin{proof}
According to Corollary \ref{corollary}, it suffices to show
$\liminf\limits_{t\rightarrow\infty}\frac{\ln Y_i(t)}{t}\geq0$.
Denote for $t\geq0$
\begin{equation*}
M_i(t):=\int_0^t\sigma_i(s)dW(s) \mbox{ and }
\bar{M}_i(t):=\int_0^t\int_{\mathbb{Y}}\ln(1+\gamma_i(s,u))\tilde{N}(ds,du).
\end{equation*}
Note that
\begin{equation*}
[M_i](t)=\langle
M_i\rangle(t)=\int_0^t\sigma_i^2(s)ds\leq\check{\sigma}_i^2t,
\end{equation*}
and by \eqref{eq113}
\begin{equation*}
\langle
\bar{M}_i\rangle(t)=\int_0^t\int_{\mathbb{Y}}(\ln(1+\gamma_i(s,u)))^2\lambda(du)ds\leq
c_2 t.
\end{equation*}
Since
\begin{equation*}
\int_0^t\frac{1}{(1+s)^2}ds=-\frac{1}{1+s}\Big|_{0}^t=\frac{t}{1+t}<\infty,
\end{equation*}
together with  Lemma \ref{large numbers}, we then obtain
\begin{equation}\label{eq121}
\lim\limits_{t\rightarrow\infty}\frac{1}{t}\int_0^t\sigma_i(s)dW(s)=0
\mbox{ a.s. and
}\lim\limits_{t\rightarrow\infty}\frac{1}{t}\int_0^t\int_{\mathbb{Y}}\ln(1+\gamma_i(s,u))\tilde{N}(ds,du)=0
\mbox{ a.s. }
\end{equation}
Moreover, it is easy to see that for any $t>s$
\begin{equation*}
\int_s^t\sigma_i(r)dW(r)=\int_0^t\sigma_i(r)dW(r)-\int_0^s\sigma_i(r)dW(r)
\end{equation*}
and
\begin{equation*}
\int_s^t\int_{\mathbb{Y}}\ln(1+\gamma_i(r,u))\tilde{N}(dr,du)=\int_0^t\int_{\mathbb{Y}}\ln(1+\gamma_i(r,u))\tilde{N}(dr,du)-
\int_0^s\int_{\mathbb{Y}}\ln(1+\gamma_i(r,u))\tilde{N}(dr,du).
\end{equation*}
Consequently, for any $\epsilon>0$  we can deduce that there exists
constant $T>0$ such that
\begin{equation}\label{eq106}
\left|\int_s^t\sigma_i(r)dW(r)\right|\leq\epsilon(s+t) \mbox{ a.s.
}\mbox{ and
}\left|\int_s^t\int_{\mathbb{Y}}\ln(1+\gamma_i(r,u))\tilde{N}(dr,du)\right|\leq\epsilon(s+t)\mbox{
a.s. }
\end{equation}
whenever $t>s\geq T$. Furthermore, by Lemma \ref{explicit solution},
together with \eqref{eq106}, we have for $t\geq T$
\begin{equation*}
\begin{split}
\frac{1}{Y_i(t)}
&\leq\frac{1}{Y_i(T)}\exp\Big(\int_T^t-\Big[a_i(s)-\frac{1}{2}\sigma^2_i(s)+
\int_{\mathbb{Y}}(\ln(1+\gamma_i(s,u))-\gamma_i(s,u))\lambda(du)\Big]ds\\
&\quad+2\epsilon(t+T)\Big)\\
&\quad+\int_T^tb_{ii}(s)\exp\Big(-\int_s^t\Big[a_i(r)-\frac{1}{2}\sigma^2_i(r)+
\int_{\mathbb{Y}}(\ln(1+\gamma_i(r,u))-\gamma_i(r,u))\lambda(du)\Big]dr\\
&\quad+2\epsilon(s+t)\Big)ds, \mbox{ a.s. }
\end{split}
\end{equation*}
This further gives that for any $t\geq T$
\begin{equation*}
\begin{split}
e^{-4\epsilon(t+T)}\frac{1}{Y_i(t)}&\leq\frac{1}{Y_i(T)}\exp\Big(\int_T^t-\Big[a_i(s)-\frac{1}{2}\sigma^2(s)+
\int_{\mathbb{Y}}(\ln(1+\gamma_i(s,u))-\gamma_i(s,u))\lambda(du)\Big]ds\\
&+\int_T^tb_{ii}(s)\exp\Big(-\int_s^t\Big[a_i(r)-\frac{1}{2}\sigma^2_i(r)+
\int_{\mathbb{Y}}(\ln(1+\gamma_i(r,u))-\gamma_i(r,u))\lambda(du)\Big]dr\\
&\quad-2\epsilon(t-s)-2\epsilon T\Big)ds, \mbox{ a.s. }
\end{split}
\end{equation*}
Thus in view of \eqref{eq105} there exists constant $K>0$ such that
for any $t\geq T$
\begin{equation*}
e^{-4\epsilon(t+T)}\frac{1}{Y_i(t)}\leq K, \mbox{ a.s. }
\end{equation*}
Hence for any $t\geq T$
\begin{equation*}
\frac{1}{t}\ln\frac{1}{Y_i(t)}\leq
4\epsilon\Big(1+\frac{T}{t}\Big)+\frac{1}{t}\ln K, \mbox{ a.s. }
\end{equation*}
and the conclusion follows by letting $t\rightarrow\infty$ and the
arbitrariness of $\epsilon>0$.
\end{proof}

\subsection{Further Properties of $n-$Dimensional Competitive Models}

We need the following lemma.

\begin{lemma}\label{lemma1}
{\rm Let the conditions of Theorem \ref{YT} hold. Assume further
that for $i,j=1,\cdots,n$
\begin{equation}\label{eq120}
R_{ij}:=\sup\left\{\frac{b_{ij}(t)}{b_{jj}(t)}, t\geq0,i\neq
j\right\}
\end{equation}
satisfy
\begin{equation}\label{eq116}
R_i(t)-\sum\limits_{i\neq j}R_{ij}R_j(t)>0,  \, t\geq0.
\end{equation}
Then
\begin{equation}
\liminf\limits_{t\rightarrow\infty}\frac{\ln Z_i(t)}{t}\geq0, \mbox{
a.s. }
\end{equation}
where $Z_i(t),  i=1,\cdots,n$ are solutions of \eqref{eq112}.}
\end{lemma}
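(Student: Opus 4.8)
The plan is to exploit the fact that $Z_i$ solves an equation of exactly the same shape as \eqref{eq103}, but with the growth rate $a_i(t)$ replaced by the (now random) coefficient $\tilde a_i(t):=a_i(t)-\sum_{i\neq j}b_{ij}(t)Y_j(t)$. Thus Lemma \ref{explicit solution} applies verbatim, with $\Phi_i$ replaced by the corresponding $\tilde\Phi_i$. Writing $\Psi_i(s,t):=\tilde\Phi_i(s)/\tilde\Phi_i(t)$, this gives
\begin{equation*}
\frac{1}{Z_i(t)}=\frac{\Psi_i(0,t)}{X_i(0)}+\int_0^t\Psi_i(s,t)b_{ii}(s)\,ds.
\end{equation*}
Since $\liminf_{t\to\infty}\ln Z_i(t)/t\geq0$ is equivalent to $\limsup_{t\to\infty}t^{-1}\ln(1/Z_i(t))\leq0$, it suffices to show that for every $\epsilon>0$ one has $1/Z_i(t)\leq C_\omega\,e^{2C\epsilon t}$ for all large $t$, where $C_\omega$ is an almost surely finite random constant and $C$ depends only on the $R_{ij}$.

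First I would write out the exponent defining $\Psi_i$. Using $\tilde a_i(r)-\tfrac12\sigma_i^2(r)+\int_{\mathbb Y}(\ln(1+\gamma_i)-\gamma_i)\lambda(du)=R_i(r)-\sum_{i\neq j}b_{ij}(r)Y_j(r)$, and writing $M_i(t)=\int_0^t\sigma_i\,dW$ and $\bar M_i(t)=\int_0^t\int_{\mathbb Y}\ln(1+\gamma_i)\tilde N$, I obtain
\begin{equation*}
\Psi_i(s,t)=\exp\Big(-\int_s^t R_i(r)\,dr+\int_s^t\sum_{i\neq j}b_{ij}(r)Y_j(r)\,dr-(M_i(t)-M_i(s))-(\bar M_i(t)-\bar M_i(s))\Big).
\end{equation*}
The only term that is not manifestly controllable is the cross-competition integral $\int_s^t\sum_{i\neq j}b_{ij}(r)Y_j(r)\,dr$; handling it is the crux of the proof.

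To control it I would use $b_{ij}(r)\leq R_{ij}\,b_{jj}(r)$ from \eqref{eq120} together with the integral identity obtained by applying It\^o's formula to $\ln Y_j$, namely
\begin{equation*}
\int_s^t b_{jj}(r)Y_j(r)\,dr=\ln Y_j(s)-\ln Y_j(t)+\int_s^t R_j(r)\,dr+(M_j(t)-M_j(s))+(\bar M_j(t)-\bar M_j(s)).
\end{equation*}
Substituting this bound into the exponent collapses the two drift integrals into $-\int_s^t\big(R_i(r)-\sum_{i\neq j}R_{ij}R_j(r)\big)\,dr$, which is $\leq0$ by the standing assumption \eqref{eq116}, and leaves only a combination of the differences $\ln Y_j(s)-\ln Y_j(t)$ and the martingale increments of $M_j,\bar M_j,M_i,\bar M_i$. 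By Theorem \ref{YT} each $\ln Y_j(r)/r\to0$ a.s., and by \eqref{eq121} each $M_j(r)/r\to0$ and $\bar M_j(r)/r\to0$ a.s.; hence for any $\epsilon>0$ there is a random $T$ with $|\ln Y_j(r)|\leq\epsilon r$ for $r\geq T$ and, exactly as in \eqref{eq106}, $|M_j(t)-M_j(s)|\leq\epsilon(s+t)$ and $|\bar M_j(t)-\bar M_j(s)|\leq\epsilon(s+t)$ for all $t>s\geq T$. Consequently $\Psi_i(s,t)\leq e^{C\epsilon(s+t)}$ for $t>s\geq T$, where $C$ depends only on the $R_{ij}$.

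Finally I would assemble the bound on $1/Z_i(t)$. Splitting $\int_0^t=\int_0^T+\int_T^t$ and using the cocycle identity $\Psi_i(s,t)=\Psi_i(s,T)\Psi_i(T,t)$, the contribution of $[0,T]$ (including $\Psi_i(0,t)/X_i(0)$) is bounded by an a.s. finite random constant times $\Psi_i(T,t)\leq e^{C\epsilon(T+t)}$, while $\int_T^t\Psi_i(s,t)b_{ii}(s)\,ds\leq\check b_{ii}\int_T^t e^{C\epsilon(s+t)}\,ds\leq (\check b_{ii}/(C\epsilon))\,e^{2C\epsilon t}$, using that $b_{ii}$ is bounded by \textbf{(A)}. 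Hence $1/Z_i(t)\leq C_\omega\,e^{2C\epsilon t}$, so $\limsup_{t\to\infty}t^{-1}\ln(1/Z_i(t))\leq2C\epsilon$, and letting $\epsilon\downarrow0$ yields the claim. The main obstacle, as indicated, is the rewriting of the cross-competition integral via the $\ln Y_j$ identity and the verification that \eqref{eq116} makes the resulting net drift nonpositive; everything else is routine martingale asymptotics inherited from Theorem \ref{YT}.
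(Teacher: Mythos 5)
Your proposal is correct and follows essentially the same route as the paper's proof: the variation-of-constants representation of $1/Z_i(t)$ from Lemma \ref{explicit solution}, the It\^o identity for $\ln Y_j$ (the paper's \eqref{eq107}) combined with $b_{ij}(r)\leq R_{ij}b_{jj}(r)$ to absorb the cross-competition integral, cancellation of the net drift via \eqref{eq116}, and the asymptotics from Theorem \ref{YT} and \eqref{eq106}. Your final assembly (splitting the integral at the random time $T$ and deriving the explicit $e^{2C\epsilon t}$ bound) is merely a more carefully written version of the paper's concluding step, which is compressed into the phrase ``carrying out similar arguments to Theorem \ref{YT}.''
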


\begin{remark}
{\rm For $i,j=1,\cdots,n$ and $t\geq0$, if $ b_{ij}(t)$ takes
finite-number values, then condition \eqref{eq120} must hold.

}

\end{remark}

\begin{proof}
It is sufficient to show
$\limsup\limits_{t\rightarrow\infty}\frac{1}{t}\ln
\frac{1}{Z_i(t)}\leq0$. Note from Lemma \ref{explicit solution} that
for any $t>s\geq0$
\begin{equation}\label{eq115}
\begin{split}
\frac{1}{Z_i(t)}&=\frac{1}{Z_i(s)}\exp\Big(\int_s^t-\Big[a_i(r)-\sum\limits_{i\neq
j}b_{ij}(r)Y_j(r)-\frac{1}{2}\sigma^2_i(r)+
\int_{\mathbb{Y}}(\ln(1+\gamma_i(r,u))-\gamma_i(r,u))\lambda(du)\Big]dr\\
&\quad-\int_s^t\sigma_i(s)dW(s)
-\int_s^t\int_{\mathbb{Y}}\ln(1+\gamma_i(s,u))\tilde{N}(ds,du)\Big)\\
&\quad+\int_s^tb_{ii}(r)\exp\Big(-\int_r^t\Big[a_i(\tau)-\sum\limits_{i\neq
j}b_{ij}(\tau)Y_j(\tau)-\frac{1}{2}\sigma^2_i(\tau)\\
&\quad+
\int_{\mathbb{Y}}(\ln(1+\gamma_i(\tau,u))-\gamma_i(\tau,u))\lambda(du)\Big]d\tau\\
&\quad-\int_r^t\sigma_i(\tau)dW(\tau)
-\int_r^t\int_{\mathbb{Y}}\ln(1+\gamma_i(\tau,u))\tilde{N}(d\tau,du)\Big)dr.
\end{split}
\end{equation}
Applying the It\^o formula, for any $t>s\geq0$
\begin{equation}\label{eq107}
\begin{split}
\int_s^tb_{ii}(r)Y_i(r)dr&=\ln Y_i(s)-\ln
Y_i(t)\\
&\quad+\int_s^t\left[a_i(r)-\frac{1}{2}\sigma_i^2(r)+\int_{\mathbb{Y}}(\ln(1+\gamma_i(r,u))-\gamma_i(r,u))\lambda(du)\right]ds\\
&\quad+\int_s^t\sigma_i(r)dW(r)+\int_s^t\int_{\mathbb{Y}}\ln(1+\gamma_i(r,u))\tilde{N}(dr,du).
\end{split}
\end{equation}
This, together with Theorem \ref{YT} and \eqref{eq106}, yields that
for any $\epsilon>0$ there exists $\bar{T}>0$ such that
\begin{equation}\label{eq114}
\begin{split}
\int_s^tb_{ii}(r)Y_i(r)dr&\leq\int_s^t\left[a_i(r)-\frac{1}{2}\sigma_i^2(r)+\int_{\mathbb{Y}}(\ln(1+\gamma_i(r,u))-\gamma_i(r,u))\lambda(du)\right]ds\\
&\quad+3\epsilon(s+t)\\
\end{split}
\end{equation}
whenever $t\geq s\geq\bar{T}$. Moreover taking into account
\eqref{eq107} and \eqref{eq114}, we have for $t>s\geq\bar{T}$
\begin{equation*}
\begin{split}
\int_s^tb_{ij}(r)Y_j(r)dr&=\int_s^t\frac{b_{ij}(r)}{b_{jj}(r)}b_{jj}(r)Y_j(r)dr\\
&\leq R_{ij}\int_s^tb_{jj}(r)Y_j(r)dr\\
&\leq 3\epsilon(s+t)R_{ij}\\
&\quad+\int_s^tR_{ij}\left[a_i(r)-\frac{1}{2}\sigma_i^2(r)+\int_{\mathbb{Y}}(\ln(1+\gamma_i(r,u))-\gamma_i(r,u))\lambda(du)\right]ds.
\end{split}
\end{equation*}
Putting this into \eqref{eq115} leads to
\begin{equation*}
\begin{split}
\frac{1}{Z_i(t)}&=\frac{1}{Z_i(s)}\exp\Big(-\int_s^t\Big[R_i(r)-\sum\limits_{i\neq j}R_{ij}R_j(r)\Big]dr+\epsilon(s+t)\Big(3\sum\limits_{i\neq j}R_{ij}+2\Big)\Big)\\
&\quad+\int_s^tb_{ii}(r)\exp\Big(-\int_r^t\Big[R_i(\tau)-\sum\limits_{i\neq j}R_{ij}R_j(\tau)\Big]d\tau\\
&\quad+\epsilon(r+t)\Big(3\sum\limits_{i\neq j}R_{ij}+2\Big)\Big)dr,
\end{split}
\end{equation*}
which, in addition to \eqref{eq116}, implies
\begin{equation*}
\begin{split}
\frac{1}{Z_i(t)}&=\frac{1}{Z_i(s)}\exp\Big(\epsilon(s+t)\Big(3\sum\limits_{i\neq
j}R_{ij}+2\Big)\Big)+\int_s^tb_{ii}(r)\exp\Big(\epsilon(r+t)\Big(3\sum\limits_{i\neq
j}R_{ij}+2\Big)\Big)dr.
\end{split}
\end{equation*}
Carrying out similar arguments to Theorem \ref{YT}, we can deduce
that there exists $K>0$ such that for $t>s\geq \bar{T}$
\begin{equation*}
\exp\Big(-2\epsilon(s+t)\Big(3\sum\limits_{i\neq
j}R_{ij}+2\Big)\Big)\frac{1}{Z_i(t)}\leq K
\end{equation*}
and the conclusion follows.
\end{proof}

Now a combination of Theorem \ref{YT} and Lemma \ref{lemma1} gives
the following theorem.

\begin{theorem}
{\rm Under the conditions of Lemma \ref{lemma1}, for each
$i=1,\cdots,n$
\begin{equation*}
\lim\limits_{t\rightarrow\infty}\frac{\ln X_i(t)}{t}=0, \mbox{ a.s.
}
\end{equation*}
}
\end{theorem}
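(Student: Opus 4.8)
The plan is to read off the conclusion from the sandwich inequality \eqref{eq111} together with the two one-sided growth estimates already established. Recall from Theorem \ref{positive solution} that $\tau_e=\infty$, so the comparison bounds
\[
Z_i(t)\leq X_i(t)\leq Y_i(t)
\]
hold for \emph{all} $t\geq0$ and each $i=1,\cdots,n$, where $Y_i$ and $Z_i$ solve \eqref{eq103} and \eqref{eq112} respectively. Since all three processes are almost surely strictly positive and $x\mapsto\ln x$ is increasing, I would take logarithms and divide by $t>0$, preserving the ordering, to obtain
\[
\frac{\ln Z_i(t)}{t}\leq\frac{\ln X_i(t)}{t}\leq\frac{\ln Y_i(t)}{t},\qquad t>0.
\]

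For the upper side I would invoke Theorem \ref{YT}: because the hypotheses of Lemma \ref{lemma1} include those of Theorem \ref{YT}, we have $\lim_{t\rightarrow\infty}\ln Y_i(t)/t=0$ a.s., so in particular $\limsup_{t\rightarrow\infty}\ln X_i(t)/t\leq0$ a.s. For the lower side, Lemma \ref{lemma1} yields $\liminf_{t\rightarrow\infty}\ln Z_i(t)/t\geq0$ a.s., whence $\liminf_{t\rightarrow\infty}\ln X_i(t)/t\geq0$ a.s. Combining the two inequalities forces $\lim_{t\rightarrow\infty}\ln X_i(t)/t=0$ a.s. for each $i$, which is the claim.

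There is essentially no obstacle beyond correctly lining up the hypotheses, since the substance of the argument has already been absorbed into the two cited results: the delicate part of Theorem \ref{YT} was the use of the exponential martingale inequality (Lemma \ref{exponential martingale}) to control the stochastic integrals and thereby force the $Y_i$-side to have exponential rate exactly zero, while the key structural condition \eqref{eq116} in Lemma \ref{lemma1} is precisely what prevents the interspecific terms $\sum_{i\neq j}b_{ij}(\cdot)Y_j(\cdot)$ from dragging the lower comparison process $Z_i$ down to extinction. The only point I would verify explicitly is that the sandwich \eqref{eq111} persists on all of $[0,\infty)$, which is guaranteed by global existence; after that the conclusion is a one-line squeeze.
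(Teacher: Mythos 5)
Your proof is correct and is exactly the argument the paper intends: the paper gives no separate proof, stating only that the result follows from ``a combination of Theorem \ref{YT} and Lemma \ref{lemma1}'', which is precisely your squeeze $Z_i(t)\leq X_i(t)\leq Y_i(t)$ combined with $\lim_{t\rightarrow\infty}\ln Y_i(t)/t=0$ and $\liminf_{t\rightarrow\infty}\ln Z_i(t)/t\geq0$. Your explicit checks --- that the comparison inequality \eqref{eq111} persists on all of $[0,\infty)$ by global existence, and that the hypotheses of Lemma \ref{lemma1} subsume those of Theorem \ref{YT} --- merely make rigorous what the paper leaves tacit.
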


Another important property of a population dynamics is the
extinction which means every species will become extinct. The most
natural analogue for the stochastic population dynamics \eqref{eq42}
is that every species will become extinct with probability $1$. To
be precise, let us give the definition.

\begin{definition}
{\rm Stochastic population dynamics \eqref{eq42} is said to be
extinct with probability $1$ if, for every initial data
$x_0\in\mathbb{R}^n_+$, the solution $X_i(t),t\geq0$, has the
property
\begin{equation*}
\lim\limits_{t\rightarrow\infty}X_i(t)\rightarrow0 \ \ \ \mbox{ a.s.
}.
\end{equation*}
}
\end{definition}

\begin{theorem}
{\rm Let assumption ${\bf(A)}$ and \eqref{eq113} hold. Assume
further that
\begin{equation*}
\eta_i:=\limsup\limits_{t\rightarrow\infty}\frac{1}{t}\int_0^t\beta_i(s)ds<0,
\end{equation*}
where, for $t\geq0$ and $i=1,\cdots,n$,
\begin{equation*}
\beta_i(t):=a_i(t)-\frac{1}{2}\sigma^2_i(t)-
\int_{\mathbb{Y}}(\gamma_i(t,u)-\ln(1+\gamma_i(t,u)))\lambda(du).
\end{equation*}
Then stochastic population dynamics \eqref{eq42} is extinct a.s.}
\end{theorem}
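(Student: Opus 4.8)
The plan is to reduce the extinction of every component $X_i$ to the decay of the upper comparison process $Y_i$ governed by \eqref{eq103}, and then read off the exponential rate directly from the explicit formula of Lemma \ref{explicit solution}. Since Theorem \ref{positive solution} furnishes $0<X_i(t)\le Y_i(t)$ for all $t\ge0$, it suffices to prove $Y_i(t)\to0$ a.s.\ for each $i=1,\dots,n$.

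First I would observe that in the explicit representation \eqref{eq9} the integrand $\Phi_i(s)b_{ii}(s)$ is nonnegative, so the denominator is bounded below by $1/X_i(0)$ and hence $Y_i(t)\le X_i(0)\Phi_i(t)$. Thus everything reduces to showing $\limsup_{t\to\infty}\frac1t\ln\Phi_i(t)<0$. The key observation is that the deterministic exponent appearing in $\Phi_i$ is exactly $\int_0^t\beta_i(s)\,ds$, by the very definition of $\beta_i$. Writing $M_i(t):=\int_0^t\sigma_i(s)\,dW(s)$ and $\bar M_i(t):=\int_0^t\int_{\mathbb{Y}}\ln(1+\gamma_i(s,u))\tilde N(ds,du)$, exactly as in the proof of Theorem \ref{YT}, we then have
\[
\ln\Phi_i(t)=\int_0^t\beta_i(s)\,ds+M_i(t)+\bar M_i(t).
\]

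Next I would invoke the strong law of large numbers for local martingales, Lemma \ref{large numbers}. The boundedness of $\sigma_i$ from assumption ${\bf(A)}$ gives $\langle M_i\rangle(t)=\int_0^t\sigma_i^2(s)\,ds\le\check\sigma_i^2 t$, while condition \eqref{eq113} gives $\langle\bar M_i\rangle(t)=\int_0^t\int_{\mathbb{Y}}(\ln(1+\gamma_i(s,u)))^2\lambda(du)\,ds\le c_2 t$. In both cases the associated $\rho(t)$ is bounded by a constant multiple of $\int_0^t(1+s)^{-2}\,ds<\infty$, so Lemma \ref{large numbers} yields $\frac1t M_i(t)\to0$ and $\frac1t\bar M_i(t)\to0$ a.s.

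Finally, combining these limits with the hypothesis $\limsup_{t\to\infty}\frac1t\int_0^t\beta_i(s)\,ds=\eta_i<0$ gives $\limsup_{t\to\infty}\frac1t\ln\Phi_i(t)=\eta_i<0$, so $\Phi_i(t)\to0$ a.s.\ (indeed exponentially fast). Hence $Y_i(t)\le X_i(0)\Phi_i(t)\to0$, and the squeeze $0<X_i(t)\le Y_i(t)$ forces $X_i(t)\to0$ a.s.\ for each $i$, which is precisely the claimed extinction of \eqref{eq42}. The argument is a direct assembly of earlier results; the only point requiring a little care is identifying the deterministic exponent in $\Phi_i$ with $\int_0^t\beta_i$ and confirming that the two martingale terms are annihilated by the strong law, so I do not anticipate any serious obstacle.
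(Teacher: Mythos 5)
Your proposal is correct and follows essentially the same route as the paper: comparison to the one-dimensional process $Y_i$, the bound $Y_i(t)\le X_i(0)\Phi_i(t)$ obtained by discarding the nonnegative integral in the denominator of \eqref{eq9}, identification of the deterministic exponent of $\Phi_i$ with $\int_0^t\beta_i(s)\,ds$, and annihilation of the two martingale terms via Lemma \ref{large numbers} under ${\bf(A)}$ and \eqref{eq113} (the paper simply reuses these limits as \eqref{eq121} from the proof of Theorem \ref{YT}, which you re-derive). No gaps.
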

\begin{proof}
Recalling by the comparison theorem that, for any $t\geq0$ and
$i=1,\cdots,n$,
\begin{equation*}
X_i(t)\leq Y_i(t),
\end{equation*}
we only need to verify $\lim\sup_{t\rightarrow\infty}Y_i(t)=0$
\mbox{ a.s.}, due to
\begin{equation*}
0\leq\liminf_{t\rightarrow\infty}X_i(t)\leq\limsup_{t\rightarrow\infty}X_i(t)\leq\limsup_{t\rightarrow\infty}Y_i(t).
\end{equation*}
Since $b_i(t)\geq0$, by \eqref{eq9} it is easy to deserve that
\begin{equation*}
\begin{split}
Y_i(t)&\leq
X_i(0)\exp\Big(\int_0^t\beta_i(s)ds+\int_0^t\sigma_i(s)dW(s)
+\int_0^t\int_{\mathbb{Y}}\ln(1+\gamma_i(s,u))\tilde{N}(ds,du)\Big)\\
&=X_i(0)\exp\Big(t\Big(\frac{1}{t}\int_0^t\beta_i(s)ds+\frac{1}{t}\int_0^t\sigma_i(s)dW(s)\\
&\quad+\frac{1}{t}\int_0^t\int_{\mathbb{Y}}\ln(1+\gamma_i(s,u))\tilde{N}(ds,du)\Big)\Big).
\end{split}
\end{equation*}
Thanks to $\eta_i<0$, in addition to \eqref{eq121}, we deduce that
$\lim\sup_{t\rightarrow\infty}Y_i(t)=0$ \mbox{ a.s. } and the
conclusion follows.

\end{proof}

\begin{remark}
{\rm In Theorem \ref{distribution}, we know that one dimensional our model has a unique invariant measure under some conditions, however we can not obtain the same result for $n$dimensional model ($n\ge 2$)}.
\end{remark}

\section{Conclusions and Further Remarks}

In this paper, we discuss competitive Lotka-Volterra population
dynamics with jumps. We show that the model admits a unique global
positive solution, investigate uniformly finite $p$-th moment with
$p>0$, stochastic ultimate boundedness, invariant measure and
long-term behaviors of solutions. Moreover, using a
variation-of-constants formula for a class of SDEs with jumps, we
provide explicit solution for the model, investigate precisely the
sample Lyapunov exponent for each component and  the extinction of
our $n$-dimensional model.

As we mentioned in the introduction section, random perturbations of
interspecific or intraspecific interactions by white noise is one of
ways to perturb population dynamics. In \cite{mmr02}, Mao, et al.
investigate
 stochastic $n$-dimensional Lotka-Volterra systems
\begin{equation}\label{eq53}
dX(t)=\mbox{diag}(X_1(t),\cdots,X_n(t))\left[(a+BX(t))dt+\sigma
X(t)dW(t)\right],
\end{equation}
where $a=(a_1,\cdots,a_n)^T, B=(b_{ij})_{n\times n},
\sigma=(\sigma_{ij})_{n\times n}$. It is interesting to know what
would happen if stochastic Lotka-Volterra systems \eqref{eq53} are
further perturbed by jump diffusions, namely
\begin{equation}\label{eq54}
\begin{split}
d X(t)&=\mbox{diag}(X_1(t^-),\cdots,X_n(t^-))\Big[(a+BX(t))dt+\sigma
X(t)d W(t)\\
&\quad+\int_{\mathbb{Y}}\gamma(X(t^-),u)\tilde{N}(dt,du)\Big],
\end{split}
\end{equation}
where $\gamma=(\gamma_1,\cdots,\gamma_n)^T$. On the other hand, the
hybrid systems driven by continuous-time Markov chains have been
used to model many practical systems where they may experience
abrupt changes in their structure and parameters caused by phenomena
such as environmental disturbances \cite{my06}. As mentioned in Zhu
and Yin \cite{zy09a, zy09b}, interspecific or intraspecific
interactions are often subject to environmental noise, and the
qualitative changes cannot be described by the traditional
(deterministic or stochastic) Lotka-Volterra models. For example,
interspecific or intraspecific interactions often vary according to
the changes in nutrition and/or food resources. We use the
continuous-time Markov chain $r(t)$ with a finite state space
$\mathcal {M}=\{1,\cdots, m\}$ to model these abrupt changes, and
need to deal with stochastic hybrid population dynamics with jumps
\begin{equation}\label{eq55}
\begin{split}
d
X(t)&=\mbox{diag}(X_1(t^-),\cdots,X_n(t^-))\Big[(a(r(t))+B(r(t))X(t))dt+\sigma(r(t))
X(t)d W(t)\\
&\quad+\int_{\mathbb{Y}}\gamma(X(t^-),r(t),u)\tilde{N}(dt,du)\Big].
\end{split}
\end{equation}
We will report these in our following papers.


\begin{thebibliography}{17}
\bibitem{a09} Applebaum, D., \emph{L\'{e}vy Processes and
Stochastics Calculus}, Cambridge University Press, $2^{nd}$ Edition,
2009.

\bibitem{g84} Gard, T.,  Persistence in stochastic food web models, {\it Bull. Math.
Biol.}, {\bf46} (1984), 357-370.

\bibitem{g86} Gard, T., Stability for multispecies population models in
random environments, {\it Nonlinear Anal.}, {\bf 10} (1986),
1411-1419.


\bibitem{g92} Gopalsamy, K.,  \emph{Stability and Oscillations in Delay Differential
Equations of Population Dynamics}, Kluwer Academic, Dordrecht, 1992.


\bibitem{hw10} Hu, G. and Wang, K., On stochastic Logistic Equation
with Markovian Switching and White Noise, {\it Osaka J. Math.},
2010, preprint.


\bibitem{js05} Jiang, D. and Shi, N., A note on nonautonomous
logistic equation with random perturbation, {\it J. Math. Anal.
Appl.}, {\bf 303} (2005), 164-172.

\bibitem{k93} Kuang, Y. , \emph{Delay Differential Equations with Applications in
Population Dynamics}, Academic Press, Boston, 1993.

\bibitem{k10} Kunita, H., It\^o's stochastic calculus: Its surprising power
for applications, {\it Stochastic Process. Appl.}, {\bf 120} (2010),
622-652.



\bibitem{ltj99} Li, X.,  Tang, C., Ji, X,  The criteria for globally stable
equilibrium in $n$-dimensional Lotka–Volterra systems, {\it J. Math.
Anal. Appl.}, {\bf 240} (1999), 600-606.

\bibitem{l80} Lipster, R., A strong law of large numbers for local
martingales, {\it Stochastics}, {\bf 3} (1980) 217-228.


\bibitem{lw11} Liu, M. and Wang, K., Persistence and extinction in
stochastic non-autonomous logistic systems, {\it J. Math. Anal.
Appl.}, {\bf 375} (2011), 443-457.

\bibitem{m08} Mao, X., \emph{Stochastic Differential Equations and Applications}, Horwood Publishing, $2^{nd}$ Edition, 2008.


\bibitem{mmr02} Mao, X., Marion, G. and  Renshaw, E., Environmental noise suppresses
explosion in population dynamics, {\it Stochastic Process. Appl.},
{\bf 97} (2002), 95-110.


\bibitem{myz05} Mao, X., Yuan, C. and Zou, J., Stochastic differential delay equations of population
dynamics, {\it J. Math. Anal. Appl.}, {\bf 304} (2005), 296-320.

\bibitem{my06} Mao, X. and Yuan, C., \emph{Stochastic Differential
Equations with Markovian Switching}, Imperial College Press, 2006.




\bibitem{os05} {\O}ksendal, B. and Sulem, A., \emph{Applied stochastic
control of jump diffusions}, $2^{nd}$ Edition, Springer, Berlin,
2007.





\bibitem{pz06} Peng, S. and Zhu, X., Necessary and sufficient
condition for comparison theorem of $1$-dimensional stochastic
differential equations, {\it Stochastic Process. Appl.}, {\bf 116}
(2006), 370-380.








\bibitem{p04} Protter, P. E., \emph{Stochastic Integration and Differential Equations},
$2^{nd}$ Edition, Springer-Verlag, New York, 2004.

\bibitem{pz97} Prato, D. and Zabczyk, J., \emph{Ergodicity for Infinite
Dimensional Systems}, Cambridge University Press, 1996.

\bibitem{r83} Roubik, D.,  Experimental community studies: Time-series tests of
competition between African and neotropical bees, {\it Ecology},
{\bf 64} (1983), 971-978.

\bibitem{r79} Roughgarden, J.,  \emph{Theory of Population Genetics and Evolutionary
Ecology: An Introduction}, Macmillan, New York, 1979.

\bibitem{ta80} Takeuchi, Y. and Adachi, N.,  The existence of globally stable equilibria
of ecosystems of the generalized Volterra type, {\it J. Math.
Biol.}, {\bf 10} (1980), 401-415.

\bibitem{ta78} Takeuchi, Y. and Adachi, N.,
 The stability of generalized Volterra equations, {\it J. Math.
Anal. Appl.}, {\bf 62} (1978), 453-473.


\bibitem{xl00} Xiao, D. and  Li, W., Limit cycles for the competitive three dimensional
Lotka–Volterra system, {\it J. Differential Equations}, {\bf 164}
(2000), 1-15.







\bibitem{zy09a} Zhu, C. and Yin, G., On hybrid competitive
Lotka-Volterra ecosystems, {\it Nonlinear Anal.}, {\bf 71} (2009),
e1370-e1379.

\bibitem{zy09b} Zhu, C. and Yin, G., On  competitive
Lotka-Volterra model in random enrironments, {\it J. Math. Anal.
Appl.}, {\bf 357} (2009), 154-170.
















\end{thebibliography}
\end{document}